\newtheorem{theorem}{Theorem}[section]
\newtheorem{lemma}[theorem]{Lemma}
\newtheorem{proposition}[theorem]{Proposition}
\theoremstyle{definition}
\newtheorem{example}[theorem]{Example}
\newcounter{theoremintro}
\newtheorem{theoremi}[theoremintro]{Theorem}
\newtheorem{corollaryi}[theoremintro]{Corollary}
\newcommand{\cZ}{{\mathcal Z}}
\newcommand{\Zb}{{\mathbb Z}}
\newcommand{\Nb}{{\mathbb N}}
\newcommand{\Rb}{{\mathbb R}}
\newcommand{\eps}{\varepsilon}
\newcommand{\sC}{{\mathscr C}}
\newcommand{\sZ}{{\mathscr Z}}
\newcommand{\erg}{{\rm erg}}
\numberwithin{equation}{section}
\DeclareMathOperator{\Sym}{Sym}
\begin{document}

\title{Bauer simplices and the small boundary property}

\author{David Kerr}
\address{David Kerr,
Mathematisches Institut,
Universit\"at M\"unster, Einsteinstr.\ 62, 48149 M\"unster, Germany}
\email{kerrd@uni-muenster.de}

\author{Grigoris Kopsacheilis}
\address{Grigoris Kopsacheilis,
Mathematisches Institut, 
Universit\"at M\"unster, Einsteinstr.\ 62, 48149 M\"unster, Germany}
\email{gkopsach@uni-muenster.de}

\author{Spyridon Petrakos}
\address{Spyridon Petrakos,
Mathematisches Institut, 
Universit\"at M\"unster, Einsteinstr.\ 62, 48149 M\"unster, Germany}
\email{spetrako@uni-muenster.de}

\date{August 30, 2024}

\begin{abstract}
We show that, for every minimal action of a countably infinite discrete group 
on a compact metrizable space, if
the extreme boundary of the simplex of invariant Borel probability measures 
is closed and has finite covering dimension then the action has the small boundary property.
\end{abstract}

\maketitle

\section{Introduction}

A basic principle in dynamics  
is that infinite groups have a complexity-lowering effect when
they act by measure-preserving or continuous transformations.
The paradigm of this regularization phenomenon is entropy,
which expresses, logarithmically, the average ``cardinality'' of a space 
under the dynamics when viewed at fixed scales (this requires an averaging mechanism
built into the group itself, in the form of amenability or soficity).
For free actions on spaces that are diffuse or have large covering dimension 
the entropy can easily be finite, and even zero.
In topological dynamics this kind of regularization also occurs at the dimensional level
and is given numerical expression through the mean dimension of the action.
Just as entropy obstructs embeddability into shifts over finite alphabets whose
cardinality is of lower logarithmic value, mean dimension obstructs embeddability
into shifts over cubes whose dimension is of lower value.

A condition closely related to but formally quite different from having mean dimension zero
is the small boundary property (SBP), which asks for a basis of opens sets whose
diameters are null for every invariant Borel probability measure (or equivalently
for every ergodic invariant Borel probability measure). 
Unlike entropy or mean dimension
this definition does not require any averaging process and makes sense
for all acting groups, but it implies mean dimension zero when the group is amenable
(Theorem~5.4 of \cite{LinWei00}) and mean dimension zero or $-\infty$ 
for every sofic approximation sequence
when the group is sofic (Theorem~8.2 of \cite{Li13}) and is known to be equivalent to 
mean dimension zero for $\Zb^d$-actions
with the marker property, which includes minimal $\Zb^d$-actions (Corollary~5.4 of \cite{GutLinTsu16}).
Moreover, when the group is amenable and the action is free
the SBP is equivalent, by an Ornstein--Weiss-type tiling argument, to 
almost finiteness in measure, which means that, up to a remainder that is
uniformly small on all invariant Borel probability measures, the space can be almost covered
by finitely many disjoint open towers with F{\o}lner shapes and levels of small diameter \cite{KerSza20} 
(see Section~\ref{S-preliminaries} for more details).

By their very definition in terms of averages over F{\o}lner sets or
sofic approximations, topological entropy and mean dimension have a measure-theoretic aspect
and are connected to the existence and distribution of probability measures
that are invariant under the action.
Given that topological entropy captures the exponential growth of the number
of partial orbits over F{\o}lner sets (or the number of dynamical models for sofic approximations)
that are distinguishable at fixed scales, and that such F{\o}lner partial orbits (or sofic models)
support uniform probability measures that are approximately invariant and 
hence weak$^*$ close to Borel probability measures that are actually invariant,
it comes as a surprise that, as the Jewett--Krieger theorem demonstrates,
nonzero and even infinite entropy is compatible with the action having only one 
invariant Borel probability measure. 
On the other hand, having nonzero mean dimension
or failing to have the SBP does have consequences for the size and geometry
of the space of invariant Borel probability measures, which brings us to the subject of the present paper.

A simple cardinality argument using balls shows that if $G\curvearrowright X$ is a continuous action 
on a compact metrizable space such that the simplex $M_G (X)$ of invariant Borel probability measures
has countable extreme boundary then the action must have the SBP.
In \cite{Ma19} Ma showed that the SBP also holds whenever $G$ is countable and amenable,
$G\curvearrowright X$ is free and minimal,
and the extreme boundary of the simplex $M_G (X)$ is compact (i.e., $M_G (X)$ is a Bauer simplex)
and zero-dimensional. 
Ma's approach makes use of noncommutative C$^*$-algebras
and was inspired by Toms, White, and Winter's proof in \cite{TomWhiWin15} of the
Toms--Winter conjecture (more precisely the implication from
strict comparison to $\sZ$-stability) in the case that the tracial state space is a Bauer simplex
whose extreme boundary has finite covering dimension, a result that was also
independently proved using similar ideas by Kirchberg and R{\o}rdam \cite{KirRor14}
and by Sato \cite{Sat12} and builds
on Matui and Sato's breakthrough in the case of finitely many extremal tracial states \cite{MatSat12}.
In this paper we take further inspiration from \cite{TomWhiWin15} as a model for 
how to proceed in the dynamical setting, in part by recasting
some of its arguments in a more purely dynamical form than what is done in \cite{Ma19},
so as to establish the following generalization of Ma's theorem.

\begin{theoremi}\label{T-main}
Let $G\curvearrowright X$ be a minimal action of a countably infinite discrete group
on a compact metrizable space. 
Suppose that $M_G (X)$ is a Bauer simplex whose extreme boundary
has finite covering dimension. Then the action has the SBP.
\end{theoremi}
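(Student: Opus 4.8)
The plan is to reduce the SBP to the following statement: for every $\eps>0$ there is a finite open cover of $X$ of mesh less than $\eps$ all of whose members have boundary null for every invariant measure. Taking such covers for $\eps=1/n$ and closing up under finite intersections produces a countable family of open sets with null boundary (since $\partial(A\cap B)\subseteq\partial A\cup\partial B$), and a mesh computation shows this family is a basis, which is exactly the SBP. To build one such cover I would start from any finite open cover $\{O_k\}$ of mesh $<\eps$ with Lebesgue number $\delta$ and form the continuous erosion functions $g_k=\rho(\,\cdot\,,X\setminus O_k)$. The eroded sets $V_k=\{g_k>s_k\}$ still cover $X$ whenever each $s_k<\delta$ (by the Lebesgue number), satisfy $V_k\subseteq O_k$, and have $\partial V_k\subseteq g_k^{-1}(s_k)$. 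Since $\mu\mapsto\mu(C)$ is affine and upper semicontinuous for closed $C$ and $M_G(X)$ is a Bauer simplex, its supremum over the simplex is attained on the compact extreme boundary $Y:=\partial_e M_G(X)$; thus $\partial V_k$ is null for all invariant measures precisely when $\sup_{y\in Y}\nu_y(g_k^{-1}(s_k))=0$, where $\nu_y$ is the ergodic measure indexed by $y$. Everything therefore reduces to finding, after possibly perturbing the $g_k$, a cut level $s_k\in(0,\delta)$ missing the \emph{bad-level set} $B_k=\{t:\sup_{y\in Y}\nu_y(g_k^{-1}(t))>0\}$; it suffices that each $B_k$ be nowhere dense.

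Next I would bring in the geometry of the simplex. The map $y\mapsto\nu_y$ is a weak$^*$ homeomorphism of $Y$ onto the ergodic measures; each $\nu_y$ is non-atomic, because minimality together with the infinitude of $G$ forces all orbits to be infinite, so an ergodic measure with an atom is impossible; and $\dim Y=d<\infty$. The finite-dimensionality enters through the decomposition theorem of dimension theory: write $Y=Y_0\sqcup\dots\sqcup Y_d$ as a disjoint union of $d+1$ subsets with $\dim Y_i\le 0$. By the ergodic decomposition, for any closed $C$ and any invariant $\mu$ one has $\mu(C)=\int_Y\nu_y(C)\,d\beta_\mu(y)=\sum_{i=0}^d\int_{Y_i}\nu_y(C)\,d\beta_\mu(y)$, so the uniform vanishing of $\nu_y(g_k^{-1}(t))$ need only be arranged separately over each zero-dimensional piece $Y_i$, the $d+1$ pieces then being recombined. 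This is the point at which I expect to mirror, in purely dynamical language, the $(d+1)$-fold patching that drives the finite-dimensional case of \cite{TomWhiWin15}: each colour contributes one zero-dimensional problem, and the bounded multiplicity $d+1$ must be shown not to spoil the final nullity of the boundaries.

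For the zero-dimensional core I would partition each $Y_i$ into finitely many small clopen-in-$Y_i$ clusters, on which the ergodic measures are weak$^*$ close to a single representative, and perturb the relevant erosion function so that on a given cluster the masses $\nu_y(g_k^{-1}(t))$ are dominated by the mass carried by the representative. Granting such a domination, a Fubini argument applied to the finitely many representatives across all clusters and colours finishes the selection: for a single measure $\nu$ one has $\int_0^{\delta}\nu(g_k^{-1}(s))\,ds=\int_X\mathrm{Leb}\{s\in(0,\delta):g_k(x)=s\}\,d\nu(x)=0$, so almost every level is $\nu$-null, and finitely many representatives share a co-null set of good levels from which a cut level $s_k\in(0,\delta)$ with $\sup_{y}\nu_y(g_k^{-1}(s_k))=0$ may be chosen. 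Feeding these levels back into the first paragraph produces the desired cover, with minimality ensuring in addition that every invariant measure has full support so that the cover behaves uniformly across $X$.

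The step I expect to be the main obstacle is exactly this zero-dimensional core, and within it the passage from weak$^*$ closeness of the ergodic measures in a cluster to genuine domination of the masses $\nu_y(g_k^{-1}(t))$. The functional $\nu\mapsto\nu(g_k^{-1}(t))$ is evaluation against a discontinuous indicator and is invisible to the weak$^*$ topology, so no soft compactness argument can by itself deliver a common null level; one must instead construct the perturbed functions quantitatively, arranging that no single level carries mass for any measure of the cluster. Making this perturbation compatible with a clopen decomposition of a merely zero-dimensional (hence possibly non-compact and non-closed) piece $Y_i$, and then controlling the recombination over the $d+1$ overlapping colours so that the total boundary mass stays exactly zero rather than merely small, is where the real work of the argument will lie.
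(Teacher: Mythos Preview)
Your proposal has a genuine gap exactly where you locate it, and it is structural rather than technical. You aim for a level $s_k$ with $\nu_y(g_k^{-1}(s_k))=0$ for \emph{every} ergodic $\nu_y$ simultaneously. Fubini handles finitely many measures, but there is no mechanism for passing from weak$^*$ closeness of $\nu_y$ to a cluster representative $\nu$ to a bound of the form $\nu_y(g_k^{-1}(t))\le C\,\nu(g_k^{-1}(t))$: evaluation at a fixed level set is neither upper nor lower semicontinuous in the measure, and any perturbation of $g_k$ that flattens a level for $\nu$ will typically create mass there for nearby $\nu_y$. The decomposition $Y=Y_0\sqcup\cdots\sqcup Y_d$ into zero-dimensional pieces makes matters worse, since the $Y_i$ are not closed, so the clopen-in-$Y_i$ clusters are not weak$^*$ compact and the Bauer hypothesis (compactness of the extreme boundary) is effectively discarded at the very point where it is needed. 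Finally, your recombination step asks that the boundary mass stay \emph{exactly} zero after patching $d+1$ overlapping colours; ``merely small'' is all one can reasonably expect from such a patching, and you give no indication of how exact nullity would be recovered.

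The paper avoids this trap by never aiming at exactly null boundaries. It uses the equivalent $\delta$-approximate form of the SBP: given disjoint closed $C,D$ and $\delta>0$, find open $U$ with $C\subseteq U\subseteq X\setminus D$ and $\mu(\partial U)\le\delta$ for all $\mu$. It produces $h_1,h_2\in C(X,[0,1])$ with $h_1h_2=0$, $h_1|_C=1$, $h_2|_D=1$, and $\mu(h_1+h_2)\ge1-\delta$ uniformly, then sets $U=\{h_1>0\}$. The uniform lower bound comes from a supremum/incrementation argument (Lemma~\ref{L-function}) driven by an orthogonal-splitting lemma (Lemma~\ref{L-scale}): there is a fixed $\alpha>0$ such that for any finite $\Omega\subseteq C(X,[0,1])$ and $\eta>0$ one can find $d_1,d_2$ with $d_1d_2=0$ and $\mu(d_ih)\ge\alpha\mu(h)-\eta$ for all $h\in\Omega$ and all $\mu$. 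Finite-dimensionality of $M_G^{\mathrm{erg}}(X)$ enters via an $(m{+}1)$-colourable refinement of a finite open cover of the \emph{closed} extreme boundary (so compactness is used essentially), combined with open castles built over individual ergodic measures (Lemma~\ref{L-qshaped castles for individual measures}) and a mutual-singularity lemma (Lemma~\ref{L-mutual singularity}) that in turn relies on a Cuntz--Pedersen affine realization result (Theorem~\ref{T-realization}). None of these ingredients appear in your outline, and they are precisely what allows a uniform-in-$\mu$ estimate without ever requiring a single level set to be null for all measures at once.
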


One significant difference from the results of Toms--White--Winter and Ma 
is that we have been able to dispense with the hypotheses 
of amenability (or nuclearity, in C$^*$-algebraic terms) and freeness, 
and so the generalization of \cite{Ma19} actually goes in a couple of different directions.
Indeed we do not need to subject the group
to the type of approximate centrality condition that is crucial in \cite{TomWhiWin15} and that
would seem to call for F{\o}lnerness of the tower shapes (compare for example Section~9 of \cite{KerSza20}). 
It might be noted, however, that nuclearity is still
implicitly playing a role on the side of the space, via our bump function constructions
in the proof of Lemma~\ref{L-function}.

What Toms, White, and Winter actually establish is a uniform McDuff property 
(Theorem~4.6 of \cite{TomWhiWin15}; see also Sections~4 and 5 of \cite{CasEviTikWhi22}) that, 
by the work of Matui and Sato in \cite{MatSat12},
implies the passage from strict comparison to $\sZ$-stability in the Toms--Winter conjecture.
This property is analogous to almost finiteness in measure and hence, in the case of amenable groups,
to the SBP. Just as the uniform McDuff property (and even the weaker uniform property $\Gamma$ \cite{CasEviTikWhiWin21}) 
implies the equivalence of strict comparison and $\sZ$-stability in the 
Toms--Winter conjecture \cite{CasEviTikWhi22} (the backward implication being a general fact \cite{Ror04}), the SBP implies,
in the amenable case, the equivalence of comparison 
and almost finiteness (Theorem~6.1 in \cite{KerSza20}), and so we can derive the following corollary
(cf.\ Theorem~3.5 in \cite{Ma19}).
For details on comparison and almost finiteness see Section~\ref{S-preliminaries}.
Almost finiteness differs from its ``in measure'' version by requiring the remainder
to be small in a topological sense. 
For free minimal actions of countably infinite amenable groups on compact metrizable spaces,
it implies that the associated crossed product C$^*$-algebra is $\sZ$-stable \cite{Ker20} and
hence falls under the scope of the classification theorem 
for simple separable $\sZ$-stable nuclear C$^*$-algebras satisfying the UCT 
(such C$^*$-algebras we call ``classifiable'' for short) \cite{GonLinNiu20,EllGonLinNiu15,TikWhiWin17}.

\begin{corollaryi}\label{C-equivalence}
Let $G\curvearrowright X$ be a free minimal action of a countably infinite amenable discrete group
on a compact metrizable space. Suppose that $M_G (X)$ is a Bauer simplex 
whose extreme boundary has finite covering dimension. 
Then the action is almost finite if and only if it has comparison.
\end{corollaryi}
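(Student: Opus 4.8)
The plan is to deduce the corollary directly from Theorem~\ref{T-main} together with the amenable theory of almost finiteness developed in \cite{KerSza20}. The only substantive step is the first, and it is purely dynamical: since $G\curvearrowright X$ is minimal and $M_G (X)$ is a Bauer simplex whose extreme boundary has finite covering dimension, Theorem~\ref{T-main} applies verbatim --- crucially, it requires neither freeness nor amenability --- and shows that the action has the SBP. All of the real work is thereby absorbed into Theorem~\ref{T-main}, and what remains is to assemble known equivalences.

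For the equivalence I would invoke the amenable framework, which is exactly where freeness and amenability enter. Recall first that for a free action of a countably infinite amenable group the SBP is equivalent, via an Ornstein--Weiss-type tiling argument, to almost finiteness in measure \cite{KerSza20}; by the first step the action is therefore almost finite in measure. The direction ``almost finiteness $\Rightarrow$ comparison'' holds with no hypothesis on $M_G (X)$ and may be quoted directly from \cite{KerSza20}, which gives the ``only if'' part. For the ``if'' part, one feeds the established almost finiteness in measure together with the assumed comparison into Theorem~6.1 of \cite{KerSza20}, which upgrades almost finiteness in measure to almost finiteness in the presence of comparison; this yields that the action is almost finite, completing the equivalence.

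I do not anticipate any genuine obstacle at the level of the corollary itself: once the SBP is in hand through Theorem~\ref{T-main}, both implications are formal consequences of results in \cite{KerSza20}, and the freeness and amenability assumptions serve only to make that theory applicable. The single point requiring care is bookkeeping --- verifying that the hypotheses of Theorem~\ref{T-main} and of the cited statements of \cite{KerSza20} are simultaneously met --- which follows at once from minimality together with freeness and amenability. In short, the difficulty of the corollary is entirely inherited from Theorem~\ref{T-main}, and the passage to the comparison/almost finiteness dichotomy is a matter of citation.
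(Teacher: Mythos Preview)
Your proposal is correct and follows essentially the same route as the paper: apply Theorem~\ref{T-main} to obtain the SBP, then invoke Theorem~5.6 of \cite{KerSza20} (SBP $\Leftrightarrow$ almost finiteness in measure for free actions of amenable groups) together with Theorem~6.1 of \cite{KerSza20} (almost finite $\Leftrightarrow$ almost finite in measure plus comparison, for free actions) to conclude. The paper does not give a separate formal proof of the corollary but derives it in the introduction by exactly this chain of citations.
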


In \cite{Nar22} Naryshkin showed that, when $G$ is a finitely generated group of polynomial growth,
every minimal action $G\curvearrowright X$ on a compact metrizable space has comparison.
Together with Corollary~\ref{C-equivalence} this yields the following.

\begin{corollaryi}
Let $G\curvearrowright X$ be a free minimal action of an infinite finitely generated group of polynomial growth
on a compact metrizable space. Suppose that $M_G (X)$ is a Bauer simplex 
whose extreme boundary has finite covering dimension. 
Then the action is almost finite and the crossed product $C(X)\rtimes G$ is $\sZ$-stable and classifiable.
\end{corollaryi}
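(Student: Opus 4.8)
The plan is to chain together the results cited in the introduction, the only genuine work being to verify that the hypotheses of Corollary~\ref{C-equivalence} and of the classification theorem are all in force. First I would record that a finitely generated group of polynomial growth is, by Gromov's theorem, virtually nilpotent and hence amenable; being infinite and finitely generated, it is in particular countably infinite. Thus $G\curvearrowright X$ is a free minimal action of a countably infinite amenable discrete group on a compact metrizable space, and the standing assumption that $M_G(X)$ is a Bauer simplex whose extreme boundary has finite covering dimension is precisely what is needed to apply Corollary~\ref{C-equivalence}.

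Next I would invoke Naryshkin's theorem from \cite{Nar22}, which guarantees that every minimal action of a finitely generated group of polynomial growth on a compact metrizable space has comparison. Feeding comparison into Corollary~\ref{C-equivalence} then immediately yields that the action is almost finite, which settles the first assertion. For the second assertion I would pass almost finiteness through \cite{Ker20}, which for free minimal actions of countably infinite amenable groups on compact metrizable spaces produces $\sZ$-stability of the crossed product $C(X)\rtimes G$.

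It then remains to confirm that $C(X)\rtimes G$ lies in the scope of the classification theorem, and here I would check the standard structural properties: separability follows from the metrizability of $X$ together with the countability of $G$; nuclearity follows from the amenability of $G$; simplicity follows from minimality combined with the (topological) freeness of the action; and the UCT follows from the fact that the associated transformation groupoid is amenable and hence places the crossed product in the bootstrap class. With $\sZ$-stability in hand and these four properties verified, the classification theorem of \cite{GonLinNiu20,EllGonLinNiu15,TikWhiWin17} applies and declares $C(X)\rtimes G$ classifiable. Since every link in this argument is a direct citation, there is no substantial obstacle; the only point requiring care is the bookkeeping that ensures the amenability and countability extracted from Gromov's theorem match the hypotheses of Corollary~\ref{C-equivalence} and that the crossed product genuinely satisfies all four conditions demanded by the classification theorem.
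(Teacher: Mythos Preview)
Your proposal is correct and follows exactly the route the paper takes: the paper states this corollary as an immediate consequence of Naryshkin's comparison result \cite{Nar22} together with Corollary~\ref{C-equivalence}, and then defers to \cite{Ker20} and the classification references \cite{GonLinNiu20,EllGonLinNiu15,TikWhiWin17} for $\sZ$-stability and classifiability. Your additional bookkeeping (amenability via Gromov, and the explicit verification of separability, nuclearity, simplicity, and the UCT) is more detailed than the paper, which leaves these checks implicit, but the logical chain is identical.
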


Elliott and Niu have also developed an approach to Theorem~\ref{T-main} in the case of free actions
and amenable acting groups by passing through a version of property Gamma for inclusions at the level of 
the crossed product \cite{EllNiu24}. This has enabled them to show that if a free minimal action of
an amenable group on a compact metrizable space has the uniform Rokhlin property
\cite{Niu22} then the hypothesis that $M_G (X)$ is a Bauer simplex is already in itself sufficient to imply the SBP.
The uniform Rokhlin property is satisfied by free actions of $\Zb^d$ (as follows from \cite{Gut11},
which establishes the stronger topological Rokhlin property)
but it remains unclear in what generality it holds for other acting groups.

Given a simple unital stably finite separable C$^*$-algebra $A$,
a C$^*$-diagonal $B\subseteq A$ \cite{Kum86}, and a countable group $G$ of unitary normalizers
of $B$ such that $G$ and $B$ together generate $A$, one can consider the induced minimal 
action of $G\curvearrowright X$ on the spectrum of $B$ (for minimality see Section~10.3 of \cite{SimSzaWil20}), 
in which case $M_G (X)$ identifies with the simplex of tracial states on $A$ via the conditional expectation onto $B$ 
(see Section~3 of \cite{CryNag16}). 
Theorem~\ref{T-main} shows that if the trace simplex of $A$ is Bauer with
finite-dimensional extreme boundary then the action $G\curvearrowright X$ has the SBP.
In general $G\curvearrowright X$ need not have the SBP, as the crossed products of the
minimal $\Zb$-actions without the SBP in \cite{LinWei00} demonstrate. But these crossed products are 
not $\sZ$-stable and hence not classifiable \cite{GioKer10}. It is an interesting question
whether $G\curvearrowright X$ will always have the SBP when $A$ is classifiable (see for example \cite{LiaTik22}).

We begin in Section~\ref{S-preliminaries} with some general definitions and notation. 
Section~\ref{S-affine} is devoted to a dynamical analogue of a C$^*$-algebraic affine function realization result
that appears as Theorem~9.3 in the paper \cite{Lin07} by Lin and was employed in both \cite{TomWhiWin15} and \cite{Ma19}.
Like Theorem~9.3 in \cite{Lin07}, this is based on work of Cuntz and Pedersen on equivalence and traces
in \cite{CunPed79}. It is pointed out by Cuntz and Pedersen in Section~8 of \cite{CunPed79} that similar
arguments also work in an equivariant setting such as ours, and so we are merely making explicit in the commutative case
some of the technical details on dynamical equivalence that were left to the reader in \cite{CunPed79} 
and then putting them together as in \cite{Lin07} to obtain the realization result, which we record as Theorem~\ref{T-realization}. 
With this at hand we then proceed to the proof of Theorem~\ref{T-main}, via a series of lemmas, in Section~\ref{S-proof}.
At the end of Section~\ref{S-proof} we also give some examples illustrating the application of Theorem~\ref{T-main}.
\medskip

\noindent{\it Acknowledgements.}
The authors were supported by the Deutsche Forschungsgemeinschaft
(DFG, German Research Foundation) under Germany's Excellence Strategy EXC 2044-390685587,
Mathematics M{\"u}nster: Dynamics--Geometry--Structure, and by the SFB 1442 of the DFG. The second named author was also supported by ERC Advanced Grant 834267 -- AMAREC.

\section{Preliminaries}\label{S-preliminaries}

For a bounded Borel function $f$ on a compact metrizable space $X$ and a Borel
probability measure $\mu$ on $X$ we usually write $\mu (f)$ for the integral $\int_X f\, d\mu$.

Let $G$ be a countable discrete group and $G\curvearrowright X$ a continuous action
on a compact metrizable space. We write $M_G (X)$ for the convex set of all $G$-invariant
Borel probability measures on $X$ equipped with the weak$^*$ topology, under which
it is a Choquet simplex.
The extreme boundary of $M_G (X)$ is the set of ergodic measures and is written $M_G^\erg (X)$.

A Choquet simplex is said to be a {\it Bauer simplex} if the set of its extreme points is closed.
This property is characterized by the fact that every continuous function on the 
extreme boundary uniquely extends to a continuous affine function on the whole simplex.
It obviously holds when there are only finitely many extreme points,
but can already fail when the set of extreme points is countably infinite.
At the opposite end from Bauer simplices is the Poulsen simplex, 
characterized by the fact that the extreme points are dense. 

For many amenable groups $G$, including $\Zb$,
every Choquet simplex can be realized as $M_G (X)$
for some free minimal subshift action $G\curvearrowright X$ \cite{Dow91,CecCor19}.
Glasner and Weiss showed that if $G$ does not have property (T) (which is the case for example
if $G$ is infinite and amenable) and $G\curvearrowright Y^G$ is a nontrivial shift action
then $M_G (X)$ is the Poulsen simplex, while if $G$ has property (T) then for
every action $G\curvearrowright X$ the simplex $M_G (X)$, when nonempty, is Bauer \cite{GlaWei97}.

The action $G\curvearrowright X$ has the {\it small boundary property (SBP)}
if $X$ has a basis of open sets whose boundaries are null for $G$-invariant 
Borel probability measures. The SBP holds if and only if
for all disjoint closed sets $C,D\subseteq X$ and $\delta > 0$ there exists an 
open set $U\subseteq X$ such that $C\subseteq U \subseteq X\setminus D$ 
and $\mu (\partial U) \leq \delta$ for every $\mu\in M_G (X)$.
This is the equivalence (i)$\Longleftrightarrow$(v) in Theorem~5.5 of \cite{KerSza20},
and although the group $G$ is assumed to be amenable there the argument
can be rewritten so as to work for general $G$ by replacing the use
of Proposition~3.4 in \cite{KerSza20} with the fact that for every $\eps > 0$ and
$K\subseteq X$ satisfying $\mu(K)<\eps$ for all $\mu\in M_G(X)$
then there is an open set $W\supseteq K$ such that $\mu(W)<\eps$ for all $\mu\in M_G(X)$
(one can apply the portmanteau theorem to see this).
The SBP is automatic when $G$ is countably infinite, $X$ has finite covering dimension and the action is free \cite{Lin95,Sza15}.
Examples of free minimal $\Zb$-actions without the SBP can be found in \cite{LinWei00}.

For sets $U,V\subseteq X$ we say that $U$ is {\it subequivalent} to $V$ and
write $U\prec V$ to mean that for every compact set $A\subseteq U$
there are finitely many open subsets $U_1 , \dots , U_n$ of $X$ covering $A$
and $s_1 , \dots , s_n \in G$ such that the set $s_i A_i$ for $i=1,\dots ,n$ are
pairwise disjoint subsets of $V$.
The action has {\it comparison} if $U\prec V$ for all nonempty open sets $U,V\subseteq X$ satisfying
$\mu (U) < \mu (V)$ for every $\mu\in M_G (X)$.

A {\it tower} is a pair $(S,B)$ where $S$ is a finite subset of $G$ (the {\it shape} of the tower)
and $B$ is a subset of $X$ (the {\it base} of the tower) such that the sets $sB$ for $s\in S$ 
(the {\it levels} of the tower) are pairwise disjoint. When convenient we can reparametrize the tower $(S,B)$
so that $e\in S$ (assuming $S$ is nonempty) by choosing a $t\in S$ and considering instead $(St^{-1} , tB)$.
The tower is {\it open}, {\it Borel}, etc.\
if the levels are open, Borel, etc. We may also use the word tower to refer to the set $SB$ when convenient.
A {\it castle} is a finite collection $\{ (S_i,B_i) \}_{i\in I}$
of towers such that the sets $S_i B_i$ for $i\in I$ are pairwise disjoint.
The castle is {\it open}, {\it Borel}, etc.\ if all of the towers are open, Borel, etc.

The action $G\curvearrowright X$ is {\it almost finite in measure} if for every finite set $K\subseteq G$
and $\delta > 0$ there is an open castle $\{ (S_i , V_i ) \}_{i\in I}$ such that 
\begin{enumerate}
\item $|tS_i \triangle S_i |/|S_i| < \delta$ for every $t\in K$ and $i\in I$, 

\item the levels of the castle have diameter less than $\delta$,

\item $\mu (X\setminus\bigsqcup_{i\in I} S_i V_i ) < \delta$ for every $\mu\in M_G (X)$.
\end{enumerate}
If instead of (iii) we require the stronger condition that
there exist $S_i' \subseteq S_i$ with $|S_i' | < \delta |S_i |$ such that
$X\setminus\bigsqcup_{i\in I} S_i V_i \prec \bigsqcup_{i\in I} S_i' V_i$,
then we say that the action is {\it almost finite}. Note that the F{\o}lner condition in (i) 
forces $G$ to be amenable.
If the action $G\curvearrowright X$ is free then it is 
almost finite if and only if it is almost finite in measure and has comparison
(Theorem~6.1 of \cite{KerSza20}),
and if moreover $G$ is amenable then almost finiteness in measure is equivalent
to the SBP (Theorem~5.6 of \cite{KerSza20}).

\section{Affine function realization}\label{S-affine}

We record in Theorem~\ref{T-realization} the dynamical analogue of the C$^*$-algebraic affine function realization result
that appears as Theorem~9.3 in \cite{Lin07}. Like the latter, Theorem~\ref{T-realization} follows 
from the theory of equivalence developed by 
Cuntz and Pedersen in \cite{CunPed79}, only this time in the equivariant setting, which was discussed in Section~8 of \cite{CunPed79}. 
The technicalities in the equivariant case are analogous to those in the non-equivariant case and were omitted in \cite{CunPed79}. 
For the convenience of the reader we will supply the details, following \cite{CunPed79}, 
for what we need to reach Theorem~\ref{T-realization}.

Throughout this section $G\curvearrowright X$ is a minimal action of a countable discrete group on a compact metrizable space.
From the action $G\curvearrowright X$ we obtain an action of $G$ on $C(X)$ 
by the formula $(sf)(x) = f(s^{-1} x)$ for all $f\in C(X)$, $s\in G$, and $x\in X$.
On the set $C(X,\mathbb{R})_+$ of nonnegative functions in $C(X,\mathbb{R})$
we define a relation $\sim$ as follows: we say that $f\sim g$ whenever there exist sequences 
$(h_n)$ in $C(X,\mathbb{R})_+$ and $(s_n)$ in $G$ such that $f=\sum_nh_n$ and $g=\sum_ns_n h_n$,
where the sums are uniformly convergent.
We also define a relation $\prec$ on $C(X,\mathbb{R})_+$ by declaring that $f\prec g$ whenever there exists some 
$f'\in C(X,\mathbb{R})_+$ such that $f\sim f'\le g$.  

\begin{proposition}
The relations $\prec$ and $\sim$ are transitive.
\end{proposition}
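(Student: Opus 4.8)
The plan is to handle the two relations in turn, reducing transitivity of $\prec$ to transitivity of $\sim$ via a domination lemma, so the real work sits in $\sim$. Throughout I will use that $G$ acts on $C(X,\Rb)$ by order-preserving linear isometries and that any single term of a uniformly convergent sum of functions in $C(X,\Rb)_+$ is dominated by the total sum.

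For $\sim$, suppose $f\sim g$ and $g\sim k$, witnessed by $f=\sum_n h_n$, $g=\sum_n s_n h_n$ and $g=\sum_m h'_m$, $k=\sum_m s'_m h'_m$. I would form the Cuntz--Pedersen common refinement of the two decompositions of $g$ by setting
\[
k_{nm}=\frac{(s_n h_n)\,h'_m}{g}
\]
on $\{g>0\}$ and $k_{nm}=0$ on $\{g=0\}$. Since $s_n h_n\le g$ and $h'_m\le g$, one has $k_{nm}\le\min(s_n h_n, h'_m)$; as $h'_m\to 0$ at every zero of $g$, this forces $k_{nm}$ to extend to a continuous nonnegative function, and by construction $\sum_m k_{nm}=s_n h_n$ and $\sum_n k_{nm}=h'_m$. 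Setting $a_{nm}=s_n^{-1}k_{nm}\in C(X,\Rb)_+$ then gives $\sum_m a_{nm}=h_n$, hence $\sum_{n,m}a_{nm}=f$, while $\sum_n (s'_m s_n)a_{nm}=s'_m\sum_n k_{nm}=s'_m h'_m$, hence $\sum_{n,m}(s'_m s_n)a_{nm}=k$. Reindexing $(a_{nm})$ and $(s'_m s_n)$ as ordinary sequences exhibits $f\sim k$.

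For $\prec$, I would first establish a domination lemma: if $f'\le g$ and $g\sim g'$, with $g=\sum_n h_n$ and $g'=\sum_n s_n h_n$, then $f'\sim f''$ for some $f''\le g'$. The proof uses the analogous proportional splitting $u_n=f' h_n/g$ (extended by $0$ across $\{g=0\}$, continuous since $u_n\le h_n$), which yields $\sum_n u_n=f'$ and $f'':=\sum_n s_n u_n\le\sum_n s_n h_n=g'$ with $f'\sim f''$. Granting this, given $f\prec g$ and $g\prec k$, I choose $f'$ with $f\sim f'\le g$ and $g'$ with $g\sim g'\le k$, apply the lemma to $f'\le g$ and $g\sim g'$ to produce $f''\le g'\le k$ with $f'\sim f''$, and then invoke transitivity of $\sim$ to conclude $f\sim f''\le k$, that is, $f\prec k$.

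The main obstacle is purely analytic and lives in the $\sim$ step: checking that the quotients $k_{nm}$ (and the $u_n$) extend continuously across the zero set of $g$, and that the resulting double sums converge uniformly. For the convergence I would bound double-indexed tails by tails of the given sums---$\sum_{m>M}k_{nm}\le\sum_{m>M}h'_m$ uniformly in $n$, and $\sum_{n>N}k_{nm}\le\sum_{n>N}s_n h_n$ with norm preserved under the isometry $s'_m$---and then combine these with the usual $\eps/2+\eps/2$ split across the two indices to obtain uniform convergence of $\sum_{n,m}a_{nm}$ to $f$ and of $\sum_{n,m}(s'_m s_n)a_{nm}$ to $k$.
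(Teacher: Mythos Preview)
Your argument is correct and rests on the same Cuntz--Pedersen proportional-splitting formula as the paper: your $a_{nm}(x)=h_n(x)\,h'_m(s_n x)/g(s_n x)$ is exactly the paper's $c_{i,j}$. The organizational difference is that the paper proves transitivity of $\prec$ directly in a single pass---building the inequality $\sum_i s_i a_i\le g$ into the construction from the outset---and simply remarks that the same computation gives $\sim$; you instead first settle $\sim$ and then factor $\prec$ through a separate domination lemma (if $f'\le g$ and $g\sim g'$ then $f'\sim f''\le g'$). Both routes are equally valid; yours isolates a reusable intermediate statement, while the paper's is marginally more economical. One small simplification worth noting: since the double sums consist of nonnegative continuous terms and converge pointwise to the continuous limits $f$ and $k$ on the compact space $X$, Dini's theorem gives uniform convergence for any enumeration immediately, which is what the paper invokes and is cleaner than the explicit $\eps/2+\eps/2$ tail estimate you sketch.
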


\begin{proof}
Let us show that $\prec$ is transitive; the transitivity of $\sim$ is proved using the same argument. Let $f,g,h\in C(X,\mathbb{R})_+$ be such that $f\prec g$ and $g\prec h$. Then we can write
$f=\sum_ia_i$ and $\sum_is_ia_i \le g=\sum_jb_j$ and $\sum_jt_jb_j\le h$ 
where $(a_i)_i,(b_j)_j$ are sequences in $C(X,\mathbb{R})_+$ and $(s_i)_i,(t_j)_j$ are sequences in $G$. 
Writing $U_i$ for the open support of $a_i$, for $x\in X$ we set
\begin{gather*}
c_{i,j}(x)=\begin{cases}\frac{a_i(x)b_j(s_ix)}{g(s_ix)}, & x \in U_i \\ 0, &\text{otherwise.}\end{cases}
\end{gather*}
This is well defined, for if $a_i(x)\ne0$ then $g(s_ix)\ge\sum_ka_k(s_k^{-1}s_ix)\ge a_i(x)>0$. 
To see that $c_{i,j}$ is continuous, it is enough to show, given an $x\in\partial U_{i}$ and a sequence $(x_n)$
in $U_i$ with $x_n\to x$, that $c_{i,j}(x_n)\to 0$, and this follows from the fact that
$a_i(x_n)\to 0$ and $b_j\le g$.

Next we remark that for $x\in U_i$ we have $\sum_jc_{i,j}(x)=a_i(x)$, while for $x\not\in U_i$ we have
$0=\sum_jc_{i,j}(x)=a_i(x)$. So in general $\sum_jc_{i,j}(x)=a_i(x)$. The convergence is uniform by Dini's theorem, 
and thus $\sum_i\sum_jc_{i,j}=\sum_ia_i=f$.

Now let $j\in\Nb$ and $x\in X$. If $g(t_j^{-1}x)=0$, then $c_{i,j}(s_i^{-1}t_j^{-1}x)=0$ for all $i\in\Nb$, so that 
$\sum_ic_{i,j}(s_i^{-1}t_j^{-1}x)=0$. If $g(t_j^{-1}x)\ne0$, then $c_{i,j}(s_i^{-1}t_j^{-1}x) = \frac{a_i(s_i^{-1}t_j^{-1}x)b_j(t_j^{-1}x)}{g(t_j^{-1}x)}$, so that $\sum_ic_{i,j}(s_i^{-1}t_j^{-1}x)\le b_j(t_j^{-1}x)$. In either case,
\begin{gather*}
\sum_j\sum_it_js_ic_{i,j}(x)\le\sum_jt_jb_j(x)\le h(x),
\end{gather*}
which shows that the series $\sum_j\sum_it_js_ic_{i,j}$ converges pointwise, 
and hence also uniformly by Dini's theorem, to a continuous function dominated by $h$. 
\end{proof}

The following properties are readily checked and will often be used without comment.
For (v) one verifies by induction that
$na+c\sim nb+d$, and then applies (iv).
\begin{enumerate}
\item If $(f_n)$ and $(g_n)$ are sequences such that $f_n \sim g_n$ for every $n$ and 
the series $\sum_nf_n=f$ and $\sum_ng_n=g$ converge uniformly, then $f\sim g$.

\item If $f_1 \prec g_1$ and $f_2 \prec g_2$ then $f_1 + f_2 \prec g_1 + g_2$.

\item If $f\leq g$ or $f\sim g$ then $f\prec g$.

\item If $f\sim g$ (resp.\ $f\prec g$) then $\lambda f \sim \lambda g$ (resp.\ $\lambda f \prec \lambda g$) for all $\lambda\geq 0$.

\item If $a,b,c,d\in C(X,\mathbb{R})_+$ are such that $c\sim d$ and $a+c\sim b+d$, then for every $n\in\Nb$ 
one has $a+\frac{1}{n}c\sim b+\frac{1}{n}d$.
\end{enumerate}

Next we define
\[
I=\{f-g: f,g\in C(X,\mathbb{R})_+,\, f\sim g\} .
\] 

\begin{lemma}\label{L-closed subspace} 
$I$ is a closed linear subspace of $C(X,\Rb )$.
\end{lemma}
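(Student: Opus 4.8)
The plan is to dispatch the linear structure first and then closedness, which is the substantial point. For linearity I would first record that $\sim$ is reflexive (take all $s_n = e$) and symmetric: if $f=\sum_n h_n$ and $g=\sum_n s_n h_n$, then setting $h_n' = s_n h_n \in C(X,\Rb)_+$ exhibits $g=\sum_n h_n'$ and $f = \sum_n s_n^{-1} h_n'$, so $g\sim f$. Given $f_1-g_1,\,f_2-g_2\in I$ with $f_i\sim g_i$, property (i) applied to the concatenation of the two sequences yields $f_1+f_2\sim g_1+g_2$, so the sum lies in $I$. For a scalar $\lambda\ge 0$, property (iv) gives $\lambda f\sim\lambda g$ whenever $f\sim g$, whence $\lambda(f-g)\in I$; for $\lambda<0$ I would combine this with symmetry, writing $\lambda(f-g)=|\lambda|(g-f)$ with $g\sim f$. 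Thus $I$ is a linear subspace.

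For closedness, take $x$ in the closure of $I$ and choose $x_k\in I$ with $\|x-x_k\|<4^{-k}$. Writing $x=\sum_{k\ge 1} z_k$ with $z_0=0$ and $z_k=x_k-x_{k-1}$, each $z_k\in I$ and the norms are summable, say $\|z_k\|\le C4^{-k}$. The goal is to represent each $z_k$ as a difference of equivalent nonnegative functions whose individual norms remain summable, so that property (i) can be applied to the summed series. Everything therefore reduces to the following norm-control claim: if $z\in I$ with $\|z\|\le\varepsilon$, then $z=p-q$ for some $p\sim q$ with $\|p\|,\|q\|\le 2\varepsilon$.

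The main obstacle, and the heart of the matter, is this claim, where property (v) does the essential work. Starting from any representation $z=p-q$ with $p\sim q$, I would set $m=\min(p,q)\in C(X,\Rb)_+$ and decompose $p=m+z_+$ and $q=m+z_-$, where $z_{\pm}$ are the positive and negative parts of $z$, so that $\|z_+\|,\|z_-\|\le\varepsilon$. Since $p\sim q$ reads $z_++m\sim z_-+m$ and trivially $m\sim m$, property (v) yields $z_++\frac{1}{n}m\sim z_-+\frac{1}{n}m$ for every $n\in\Nb$. Putting $p'=z_++\frac{1}{n}m$ and $q'=z_-+\frac{1}{n}m$, the common terms cancel so that $p'-q'=z$, while $\|p'\|,\|q'\|\le\varepsilon+\frac{1}{n}\|m\|$; choosing $n$ large enough makes both at most $2\varepsilon$, which proves the claim.

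Finally I would assemble the pieces: applying the claim to each $z_k$ gives $z_k=p_k-q_k$ with $p_k\sim q_k$ and $\|p_k\|,\|q_k\|\le 2C4^{-k}$, so that $\sum_k p_k$ and $\sum_k q_k$ converge uniformly to nonnegative functions $p,q\in C(X,\Rb)_+$. Property (i) then gives $p\sim q$, and $p-q=\sum_k z_k=x$, whence $x\in I$. This shows $I$ is closed.
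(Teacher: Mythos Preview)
Your proof is correct and follows essentially the same strategy as the paper: both reduce closedness to a norm-control claim (representing $z\in I$ as $p-q$ with $p\sim q$ and $\|p\|,\|q\|$ controlled by $\|z\|$) established via property (v), and then assemble a telescoping series using property (i). Your route through $m=\min(p,q)$, applying (v) with $c=d=m$, is a mild streamlining of the paper's version, which instead applies (v) with $c=b$, $d=a$ from the original representation $z=a-b$ and then performs an extra symmetrization step.
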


\begin{proof}
That $I$ is a linear subspace follows from the basic properties of $\sim$. 
To show that $I$ is closed, we first verify, for a given $f\in I$ and $\varepsilon>0$,
that there are $g,h\in C(X,\mathbb{R})_+$ such that $g\sim h$ and $f=g-h$ and $\|g+h\|\le \|f\|+\varepsilon$.
By the definition of $I$ we can write $f=a-b$ where $a,b\in C(X,\mathbb{R})_+$ are such that $a\sim b$. 
Writing $f_+$ (resp.\ $f_-$) for the positive (resp.\ negative) part of $f$, we have $f_+ + b = f_- + a$, as $f=f_+ - f_-$. By property (v) we have $f_+ + \frac{1}{n}b\sim f_-+\frac{1}{n}a$ for every $n$.
Set $g_n=f_+ + \frac{1}{n}(a+b)$ and $h_n=f_- + \frac{1}{n}(a+b)$. Then $g_n-h_n=f$ and
\begin{gather*}
g_n = \Big(f_+ + \frac{1}{n} b\Big) + \frac{1}{n} a \sim f_- + \frac{1}{n} a + \frac{1}{n} a \sim f_- +\frac{1}{n} a + \frac{1}{n}b = h_n,
\end{gather*}
while for large enough $n$ we have $\|g_n+h_n\|=\| |f| + \frac{2}{n}(a+b)\|<\|f\|+\varepsilon$, in which case
$g_n$ and $h_n$ fulfill the desired conditions.

Now let $(f_n)$ be a sequence in $I$ with $f_n\to f\in C(X,\mathbb{R})$ and let us show that $f\in I$.
By passing to a subsequence if necessary, we may assume without loss of generality that $\|f_{n+1}-f_n\|<2^{-n}$ for each $n$. Since $I$ is a linear subspace and hence $f_{n+1}-f_n\in I$, by the first paragraph
we can find for each $n$ elements $g_n,h_n\in C(X,\mathbb{R})_+$ such that $g_n\sim h_n$ and
$f_{n+1}-f_n=g_n-h_n$ and $\|g_n+h_n\|<2^{-n}$. Since $\max\{\|g_n\|,\|h_n\|\}<2^{-n}$, the series $g:=\sum_ng_n$ and $h:=\sum_nh_n$ converge uniformly. Also, $g\sim h$ by property (i).
Finally, we observe that
\begin{gather*}
f-f_1=\sum_n(g_n-h_n)=g-h
\end{gather*}
and therefore $f=(g-h)+f_1\in I$.
\end{proof}

Having shown $I$ to be a closed linear subspace, we define $V$ to be the quotient space $C(X,\mathbb{R})/I$
with the quotient norm $\|\! \cdot\! \|_V$.
The quotient map $C(X,\mathbb{R})\to V$ will be written $g\mapsto\overline{g}$.
The dual $V^*$ of $V$ identifies in the obvious way with the annihilator of $I$, i.e., with the space of (signed) Borel measures 
that satisfy $\mu(f)=\mu(g)$ whenever $f\sim g$.

\begin{lemma}\label{L-constant}
Let $g\in C(X,\mathbb{R})_+ \setminus \{ 0 \}$. Then there is a constant $L(g)>0$ such that $h\prec \|h\| L(g) g$ for all $h\in C(X,\mathbb{R})_+$.
\end{lemma}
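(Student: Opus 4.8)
The plan is to reduce the statement to the case of the constant function $1$ and then extract the required constant from a finite subcover furnished by minimality. Since $h \le \|h\|\cdot 1$ pointwise, property (iii) gives $h \prec \|h\|\cdot 1$; so once we produce a constant $L(g) > 0$ with $1 \prec L(g) g$, property (iv) yields $\|h\|\cdot 1 \prec \|h\| L(g) g$ and the transitivity of $\prec$ established above delivers $h \prec \|h\| L(g) g$ for every $h\in C(X,\mathbb{R})_+$. Everything therefore reduces to showing $1 \prec L(g) g$ for an appropriate $L(g)$.

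To obtain this, I would set $U = \{x \in X : g(x) > 0\}$, a nonempty open set. By minimality its $G$-translates cover $X$, and by compactness there are $s_1, \dots, s_m \in G$ with $X = \bigcup_{i=1}^m s_i U$. Using that $X$, being compact metrizable, is normal, I would choose a partition of unity $\phi_1, \dots, \phi_m \in C(X,\mathbb{R})_+$ subordinate to this cover, so that $\sum_{i=1}^m \phi_i = 1$ and $\supp \phi_i \subseteq s_i U$ for each $i$, and then set $f' = \sum_{i=1}^m s_i^{-1}\phi_i \in C(X,\mathbb{R})_+$.

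Two things then need to be checked. The equivalence $1 \sim f'$ is immediate from the definition: the identities $1 = \sum_i \phi_i$ and $f' = \sum_i s_i^{-1}\phi_i$ are exactly of the required form, with the $\phi_i$ playing the role of the summands and the $s_i^{-1}$ the role of the group elements. For the domination $f' \le L(g) g$, I would observe that $\supp(s_i^{-1}\phi_i) = s_i^{-1}\supp \phi_i \subseteq U$ is compact, so that $K := \supp f'$ is a compact subset of $U$, on which the positive continuous function $g$ attains a minimum $\delta > 0$. Since each $\phi_i \le 1$ we have $f' \le m$ pointwise, whence $f' \le (m/\delta)\, g$ on $K$ (where $g \ge \delta$), while $f'$ vanishes off $K$; so $L(g) = m/\delta$ works, and together with $1 \sim f'$ this gives $1 \prec L(g) g$.

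The step requiring the most care is the domination bound, which hinges on $g$ being bounded below on $\supp f'$ and hence forces that support to sit inside a compact subset of $U$. This is precisely why I would insist that the partition of unity satisfy $\supp \phi_i \subseteq s_i U$ (the closure of $\{\phi_i > 0\}$, not merely $\{\phi_i > 0\}$ itself, contained in $s_i U$); normality of $X$ makes such a choice possible. By contrast, minimality enters the argument only to guarantee the finite cover of $X$ by translates of $U$.
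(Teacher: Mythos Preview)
Your proof is correct and shares the paper's key input---minimality and compactness give finitely many translates $s_1U,\dots,s_mU$ of the open support of $g$ covering $X$---but the execution diverges from there. You build a partition of unity subordinate to this cover, push it back to $U$ to obtain $f'=\sum_i s_i^{-1}\phi_i$ with $1\sim f'$, and then bound $f'\le (m/\delta)g$ using compactness of $\supp f'\subseteq U$. The paper instead observes directly that $\sum_{i=1}^m s_i g$ is a continuous function that is strictly positive on all of $X$ (since its zero set is $X\setminus\bigcup_i s_iU=\varnothing$), hence bounded below by some $\theta>0$; then
\[
h\le \|h\|\cdot 1 \le \frac{\|h\|}{\theta}\sum_{i=1}^m s_i g \sim \frac{m\|h\|}{\theta}\, g,
\]
the last equivalence being immediate from the definition (each $s_ig\sim g$). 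This sidesteps the partition of unity entirely and yields the same constant $L(g)=m/\theta$ in one line. Your route is a perfectly good alternative; the paper's is just shorter because it exploits the fact that $g$ itself, once translated and summed, already dominates a positive constant.
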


\begin{proof}
Let $U$ be the open support of $g$, which is nonempty. By minimality and compactness we can find $s_1,\dots,s_n\in G$ 
such that $\bigcup_{i=1}^ns_iU=X$, so that there exists a $\theta>0$ for which $\sum_{i=1}^ns_ig\ge\theta$. For $h\in C(X,\mathbb{R})_+$ we have
\[
h\le \|h\|1\le \frac{\|h\|}{\theta}\sum_{i=1}^ns_ig\sim \frac{n}{\theta}\|h\|g,
\]
and so we may take $L(g)=n/\theta$ by the transitivity of $\prec$.
\end{proof}

\begin{lemma}\label{L-open}
The image of $C(X,\mathbb{R})_+\setminus\{0\}$ under the quotient map $C(X,\mathbb{R})\to V$ is an open set. 
\end{lemma}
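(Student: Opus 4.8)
The plan is to show that every point of the image is an interior point. Fix $g\in C(X,\mathbb{R})_+\setminus\{0\}$ and recall from Lemma~\ref{L-constant} the constant $L(g)>0$ with $h\prec\|h\|L(g)g$ for all $h\in C(X,\mathbb{R})_+$. I would first apply this with $h=1$ (the constant function $1$, which is positive and nonzero) to obtain some $f_0\in C(X,\mathbb{R})_+$ with $1\sim f_0$ and $f_0\le L(g)g$. The point of $f_0$ is that it lets us trade a constant for something dominated by a multiple of $g$ without changing the class in $V$: since $1\sim f_0$ forces $1-f_0\in I$, we have $\overline{1}=\overline{f_0}$, and more generally $\delta(1-f_0)\in I$ for any scalar $\delta$.

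Next I would set $\delta=1/(2L(g))$ (any positive $\delta<1/L(g)$ works) and take $\overline{w}\in V$ with $\|\overline{w}-\overline{g}\|_V<\delta$. Because the quotient norm equals the infimum of $\|\,\cdot\,-u\|_\infty$ over $u\in I$ and $I$ is exactly the kernel of the quotient map, I may choose the representative $w\in C(X,\mathbb{R})$ of $\overline{w}$ so that $\|w-g\|_\infty<\delta$: concretely, pick $u\in I$ with $\|w-g-u\|_\infty<\delta$ and replace $w$ by $w-u$. Since $g\ge 0$, this gives $w+\delta 1>g\ge 0$ pointwise, so in particular $w+\delta 1\ge g$.

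I would then define $w'=w+\delta(1-f_0)$. As $\delta(1-f_0)\in I$, we have $\overline{w'}=\overline{w}$, so it suffices to check that $w'\in C(X,\mathbb{R})_+\setminus\{0\}$. Using $w+\delta 1\ge g$ and $f_0\le L(g)g$ I estimate $w'=(w+\delta 1)-\delta f_0\ge g-\delta f_0\ge g-\delta L(g)g=(1-\delta L(g))g$. Because $\delta<1/L(g)$, the factor $1-\delta L(g)$ is strictly positive, whence $w'\ge 0$ and moreover $w'>0$ wherever $g>0$; as $g\ne 0$, this forces $w'\ne 0$. Thus $\overline{w}=\overline{w'}$ lies in the image of $C(X,\mathbb{R})_+\setminus\{0\}$, so the open ball of radius $\delta$ about $\overline{g}$ is contained in the image and $\overline{g}$ is interior. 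Since $\overline{g}$ was arbitrary, the image is open.

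The only genuine obstacle is the positivity step: a nearby class $\overline{w}$ need not admit a positive representative outright, because $g$ may vanish on part of $X$ while $w$ dips slightly negative there, and one cannot simply clip $w$ at $0$ without leaving its $V$-class. This is exactly what Lemma~\ref{L-constant} repairs. Adding the constant $\delta 1$ dominates the negative part of $w$ uniformly, the equivalence $1\sim f_0$ lets us subtract that constant back inside $V$, and the bound $f_0\le L(g)g$ keeps the compensating perturbation $\delta f_0$ controlled by $\delta L(g)g$, which is small enough not to overshoot $g$ precisely when $\delta<1/L(g)$.
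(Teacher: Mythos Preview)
Your proof is correct and follows essentially the same route as the paper: both use Lemma~\ref{L-constant} with radius $\tfrac{1}{2L(g)}$ and end up with a representative of the nearby class that is $\ge\tfrac{1}{2}g$. The only cosmetic difference is that the paper applies $\prec$ to the small error $|k|$ (a representative of $\overline{g-w}$ with $\|k\|<\delta$) to get $|k|\sim w_0\le\tfrac12 g$ and then replaces $w$ by $g-k-w_0+|k|$, whereas you apply $\prec$ once to the constant function $1$ to obtain a single $f_0\le L(g)g$ that serves uniformly for every nearby class.
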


\begin{proof}
Let $z$ be an element in the image of $C(X,\mathbb{R})_+\setminus\{0\}$, 
say $z=\overline{g}$ where $0\ne g\in C(X,\mathbb{R})_+$ 
is a representative. With $L(g)$ as in Lemma~\ref{L-constant},
we will show that the open ball centred at $z$ of radius $\frac{1}{2}L(g)^{-1}$ is contained in the image of $C(X,\mathbb{R})_+\setminus\{0\}$. Let $\overline{h}$ be an element of $V$ with $\|\overline{h}-\overline{g}\|<\frac{1}{2}L(g)^{-1}$. 
Take $k\in (g-h)+I$ with $\|k\|<\frac{1}{2}L(g)^{-1}$. By the definition of $L(g)$ we have $|k|\prec \|k\| L(g) g\le \frac{1}{2}g$,
so that there exists a $w\in C(X,\mathbb{R})_+$ such that $|k|\sim w\le \frac{1}{2}g$. Set $k' = k + w - |k|$. We have $k'-k\in I$, and therefore the class of $h$ in $V$ is the same as the class of $g-k'$. Since $g-k'\ge g-w\ge\frac{1}{2}g\ne0$, this shows that $\overline{h}$ belongs to the image of $C(X,\mathbb{R})_+\setminus\{0\}$.
\end{proof}

\begin{lemma}\label{L-equality}
$\{ g\in C(X,\mathbb{R}): \mu(g)>0 \text{ for all }\mu\in M_G(X)\}=(C(X,\mathbb{R})_+\setminus\{0\})+I$.
\end{lemma}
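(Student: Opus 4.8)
The plan is to prove the two inclusions separately. Write $P$ for the left-hand side, and note that the right-hand side is exactly the preimage under the quotient map of the set $\mathcal{C}\subseteq V$ which is the image of $C(X,\mathbb{R})_+\setminus\{0\}$: indeed $g\in(C(X,\mathbb{R})_+\setminus\{0\})+I$ if and only if $\overline{g}\in\mathcal{C}$. This $\mathcal{C}$ is an open convex cone, the openness being precisely the content of Lemma~\ref{L-open}, convexity and the cone property being immediate from linearity of the quotient map. Throughout I would use the identification recorded just before the statement of $V^*$ with the space of signed Borel measures $\nu$ satisfying $\nu(f)=\nu(g)$ whenever $f\sim g$; since $h\sim sh$ for every $h\in C(X,\mathbb{R})_+$ and $s\in G$, these are precisely the $G$-invariant signed measures, so that $M_G(X)$ sits inside $V^*$ and in particular annihilates $I$.

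The inclusion $\supseteq$ is the easy direction. Writing $g=g_0+k$ with $g_0\in C(X,\mathbb{R})_+\setminus\{0\}$ and $k\in I$, for any $\mu\in M_G(X)$ we have $\mu(k)=0$ and hence $\mu(g)=\mu(g_0)$. Since $g_0$ is continuous, nonnegative and nonzero, the set $\{g_0>0\}$ is a nonempty open set, and minimality forces every nonempty open set to carry strictly positive mass under each invariant measure (the support of such a measure is a nonempty closed invariant set, hence all of $X$). Thus $\mu(g_0)>0$, which gives $g\in P$.

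The inclusion $\subseteq$ is the heart of the matter, and I would argue by contraposition using Hahn--Banach separation. Suppose $\overline{g}\notin\mathcal{C}$. As $\mathcal{C}$ is open and convex, I can separate the point $\overline{g}$ from $\mathcal{C}$: there is a nonzero $\phi\in V^*$ with $\phi(\overline{g})\le\phi(z)$ for all $z\in\mathcal{C}$. The crucial structural input is that $\mathcal{C}$ is a cone, so $\phi(\overline{g})\le t\phi(z)$ for every $t>0$ and $z\in\mathcal{C}$; letting $t\to 0^+$ gives $\phi(\overline{g})\le 0$, while letting $t\to\infty$ forces $\phi(z)\ge 0$ for all $z\in\mathcal{C}$. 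Let $\nu\in V^*$ be the $G$-invariant signed measure corresponding to $\phi$. Then $\nu(h)\ge 0$ for every $h\in C(X,\mathbb{R})_+\setminus\{0\}$, and hence, approximating $h\in C(X,\mathbb{R})_+$ by $h+\eps 1$ and letting $\eps\to 0$, for every $h\in C(X,\mathbb{R})_+$; thus $\nu$ is a positive measure. It is nonzero since $\phi\ne 0$, so $\nu(X)=\nu(1)>0$ and $\mu:=\nu/\nu(X)$ lies in $M_G(X)$. Finally $\mu(g)=\phi(\overline{g})/\nu(X)\le 0$, exhibiting an invariant measure at which $g$ is nonpositive and completing the contrapositive.

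The main obstacle I anticipate is organizing the separation step so that the signs come out correctly and the cone structure of $\mathcal{C}$ is fully exploited to upgrade the single separating inequality into the two facts $\phi\ge 0$ on $\mathcal{C}$ and $\phi(\overline{g})\le 0$. Here the openness of $\mathcal{C}$ from Lemma~\ref{L-open} is exactly what licenses the geometric Hahn--Banach argument, and the remaining work---verifying that the separating functional is represented by a genuine positive $G$-invariant measure and then normalizing it---is routine.
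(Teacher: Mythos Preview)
Your proof is correct and follows essentially the same approach as the paper's: both directions are handled the same way, and for $\subseteq$ both arguments pass to the quotient $V$, invoke the openness of the image of $C(X,\mathbb{R})_+\setminus\{0\}$ from Lemma~\ref{L-open}, and apply Hahn--Banach separation to produce a positive $G$-invariant measure at which $g$ is nonpositive. The only cosmetic differences are that you exploit the cone structure via limits $t\to 0^+$ and $t\to\infty$ where the paper uses the specific elements $\varepsilon 1$ and $nh$, and you explicitly normalize the separating measure to lie in $M_G(X)$, a step the paper leaves implicit.
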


\begin{proof}
Write $A$ for the set on the left-hand side.
The inclusion $\supseteq$ is a consequence of minimality. 
For the reverse inclusion, since $A=A+I$ it suffices to show that the images of $A$ and
$C(X,\mathbb{R})_+\setminus\{0\}$ under the quotient map $C(X,\mathbb{R})\to V$ are equal.
For brevity write $W$ for the second of these images, which is open by Lemma~\ref{L-open}.
Suppose that the two images are not equal. Then there is a $g\in C(X,\mathbb{R})$ such that $\mu(g)>0$ for all $\mu\in M_G(X)$ 
but $\overline{g}\not\in W$. Since $W$ and $\{\overline{g}\}$ are both convex and $W$ is open, by the Hahn--Banach 
saparation theorem there is a $\varphi\in V^*$ such that $\varphi(\overline{g})\le t<\varphi(\overline{h})$ for all $h\in C(X,\mathbb{R})_+\setminus\{0\}$, which means that there is a 
signed invariant Borel measure $\mu$ such that $\mu(g)\le t<\mu(h)$ for all $h\in C(X,\mathbb{R})_+\setminus\{0\}$. 
Since $t<\mu(\varepsilon1)$ for all $\varepsilon>0$ we must have $t\le0$. At the same time, if there were 
an $h\in C(X,\mathbb{R})_+\setminus\{0\}$ such that $\mu(h)<0$ then we would have that $t<n\mu(h)$ for every $n\in\Nb$, 
a contradiction. It follows that $\mu(g) \le 0 \le \mu(h)$ for all $h\in C(X,\mathbb{R})_+ \setminus \{0\}$. 
The second of these inequalities shows that $\mu$ is a positive invariant Borel measure, while the first 
gives a contradiction to the fact that $\nu(g)>0$ for all $\nu\in M_G(X)$.
\end{proof}

\begin{lemma}\label{L-equality norm}
Let $g\in C(X,\mathbb{R})_+$. Then $\|\overline{g}\|_V=\inf\{\|g-k\|: k\in I, \, k\le g\}$. 
\end{lemma}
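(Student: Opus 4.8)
The inequality $\|\overline g\|_V\le\inf\{\|g-k\| : k\in I,\ k\le g\}$ is immediate, since the infimum defining the quotient norm runs over the larger set $\{g-k : k\in I\}$. So the content is the reverse inequality, and the plan is to produce, for each $\varepsilon>0$, a \emph{nonnegative} representative of $\overline g$ of norm at most $\|\overline g\|_V+\varepsilon$: writing such a representative as $g-k$ with $k\in I$ automatically yields $k\le g$. First I would record the elementary bound $\mu(g)\le\|\overline g\|_V$ for every $\mu\in M_G(X)$, which follows from $\mu(g)=\mu(g-k)\le\|g-k\|$ for all $k\in I$ by taking the infimum (here every $\mu\in M_G(X)$ annihilates $I$, as noted after Lemma~\ref{L-closed subspace}).

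The main step is a \emph{capping} construction. Set $\alpha=\|\overline g\|_V$ and, given $\varepsilon>0$, put $s=\alpha+\varepsilon$; we may assume $\|g\|>s$, as otherwise $g$ itself works. Let $e=(g-s1)_+$ and $\rho=(s1-g)_+$ be the excess and the slack of $g$ at the level $s$. These are nonnegative, and since $e-\rho=g-s1$ the bound above gives $\mu(\rho-e)=s-\mu(g)\ge s-\alpha=\varepsilon>0$ for every $\mu\in M_G(X)$. I would then claim the subequivalence $e\prec\rho$, that is, the existence of $w\in C(X,\mathbb{R})_+$ with $e\sim w\le\rho$. Granting this, set $f=(g\wedge s1)+w$. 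Then $f-g=w-e\in I$, so $f$ represents $\overline g$, and $0\le f\le s1$, since where $g\le s$ one has $f=g+w\le g+\rho=s1$, while where $g>s$ one has $\rho=0$, hence $w=0$ and $f=s1$. Thus $k:=g-f=e-w\in I$ satisfies $k\le g$ and $\|g-k\|=\|f\|\le s=\alpha+\varepsilon$, which completes the reduction.

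It remains to justify the subequivalence $e\prec\rho$ from the strict uniform domination $\mu(\rho-e)\ge\varepsilon$ on $M_G(X)$, and this is where the Cuntz--Pedersen machinery enters and is the main obstacle. I would first apply Lemma~\ref{L-equality} to $\rho-e-\tfrac{\varepsilon}{2}1$, whose integral against every $\mu$ is at least $\tfrac{\varepsilon}{2}>0$, to write $\rho-e=c+k$ with $k\in I$ and $c\ge\tfrac{\varepsilon}{2}1$ strictly positive; writing $k=p-q$ with $p\sim q$, this reads $\rho+q=e+c+p$. The difficulty is to pass from this relation to an honest $e\prec\rho$: the auxiliary term $q$ must be cancelled, and since $\sim$ is not norm preserving this cannot be done by truncation. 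Here I would invoke property~(v) (the $\tfrac1n$-device), which gives $(e+c)+\tfrac1n p\sim\rho+\tfrac1n q$ for every $n\in\Nb$, and then use the closedness of $I$ (Lemma~\ref{L-closed subspace}) to pass to the limit and obtain $e+c\sim\rho$, whence $e\le e+c\sim\rho$ yields $e\prec\rho$ by transitivity. The delicate point, and the crux of the whole lemma, is exactly this limiting cancellation, i.e.\ that the order on $V$ is detected by the invariant measures; this is the commutative equivariant instance of the equivalence theory of \cite{CunPed79}, and the strict positivity of $c$, guaranteed by the gap $\varepsilon$ together with the full support of invariant measures under minimality, is what makes the packing of $e$ beneath $\rho$ possible. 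Alternatively, one can realize the subequivalence by a finite covering argument in the spirit of Lemma~\ref{L-constant}.
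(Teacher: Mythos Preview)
Your capping strategy is natural and would work, but there is a genuine gap at the limiting step. From property~(v) you correctly obtain $(e+c)+\tfrac{1}{n}p\sim\rho+\tfrac{1}{n}q$ for every $n$, and closedness of $I$ (Lemma~\ref{L-closed subspace}) then yields $(e+c)-\rho\in I$. However, this does \emph{not} give $(e+c)\sim\rho$: membership in $I$ means only that \emph{some} decomposition $(e+c)-\rho=a-b$ with $a,b\ge 0$ satisfies $a\sim b$, not that the particular decomposition with $a=e+c$ and $b=\rho$ does. The implication ``$a-b\in I$ and $a,b\ge 0$ $\Rightarrow$ $a\sim b$'' is precisely the nontrivial Cuntz--Pedersen statement you are trying to reach, so invoking closedness of $I$ here is circular. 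Your alternative ``finite covering argument in the spirit of Lemma~\ref{L-constant}'' is not fleshed out and it is not clear how that lemma would yield $e\prec\rho$ rather than $e\prec L\cdot c$.

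The fix is simple and is exactly the manoeuvre the paper uses: do not pass to the limit. From $e\le(e+c)+\tfrac{1}{n}p\sim\rho+\tfrac{1}{n}q$ you get $e\prec\rho+\tfrac{1}{n}q$, hence $w_n$ with $e\sim w_n\le\rho+\tfrac{1}{n}q$. Setting $f_n=(g\wedge s1)+w_n$ you still have $f_n-g=w_n-e\in I$ and $f_n\ge 0$, while $f_n\le s1+\tfrac{1}{n}q$ everywhere (since $\rho=0$ where $g>s$), so $\|f_n\|\le s+\tfrac{1}{n}\|q\|$. For large $n$ this is at most $\alpha+2\varepsilon$, which suffices. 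The paper organizes the same idea slightly differently, via the auxiliary quantity $\beta=\inf\{\|h\|:h\ge 0,\ g\prec h\}$ and the chain $\alpha\le\beta\le\|\overline{g}\|_V$, but the heart of both arguments is the same: property~(v) produces a $\tfrac{1}{n}$-perturbed subequivalence whose norm defect vanishes, and one exploits the norm smallness directly rather than attempting to cancel the perturbation in $\sim$.
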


\begin{proof}
Put $\alpha =\inf\{\|g-k\|: k\in I, \, k\le g\}$. By definition $\|\overline{g}\|_V \le\alpha$, 
and so let us show the reverse inequality. 

First we set $\beta = \inf\{\|h\|: h\in C(X,\Rb)_+, \, g\prec h\}$ and argue that $\alpha\leq\beta$.
Given an $\varepsilon>0$, take an $h\in C(X,\Rb)_+$ with $g\prec h$ such that $\|h\|<\beta+\varepsilon$. 
Then we can find a $g'\in C(X,\mathbb{R})_+$ such that $g \sim g' \le h$. Put $k=g-g'$. Since $g\sim g'$, 
we have $k \in I$, and since $g'\ge0$ we have $k\le g$. 
Moreover $\alpha\le\|g-k\|=\|g'\|\le \|h\|<\beta+\varepsilon$, so letting $\varepsilon\to0$ yields $\alpha\le\beta$.

To conclude, let us show that $\beta\le\|\overline{g}\|_V$. Given an $\varepsilon>0$ we find a $k\in I$ with $\|g-k\|<\|\overline{g}\|_V+\varepsilon$. 
Write $g-k = (g-k)_+ - (g-k)_-$. Since $k\in I$ we can write $k=k_1-k_2$ with $k_1,k_2\in C(X,\mathbb{R})_+$ and $k_1\sim k_2$.
Then $g + (g-k)_- + k_2 = (g-k)_+ + k_1$. It follows by property (v) that for every $n\in\Nb$ we have
\[
g \le g + (g-k)_- +\frac{1}{n}k_2 \sim (g-k)_+ + \frac{1}{n}k_1 
\]
and therefore $g \prec (g-k)_++\frac{1}{n}k_1$ by the transitivity of $\prec$, so that for large enough $n$ we obtain
\begin{gather*}
\beta \le \Big\|(g-k)_++\frac{1}{n}k_1 \Big\|\le\|g-k\|+\frac{1}{n}\|k_1\|<\|\overline{g}\|_V+2\varepsilon . \qedhere
\end{gather*}
\end{proof}

\begin{theorem}\label{T-realization}
Let $G\curvearrowright X$ be a minimal action of a countably infinite discrete group
on a compact metrizable space. Suppose that $M_G (X)$ is a Bauer simplex.
Let $f$ be a strictly positive function in $C(M_G^\erg (X),\Rb)$ and let $\delta > 0$. 
Then there is a nonnegative function $h\in C(X,\Rb)$ with $\| h \| \leq \| f \| + \delta$ 
such that $\mu (h) = f(\mu )$ for all $\mu\in M_G^\erg (X)$.
\end{theorem}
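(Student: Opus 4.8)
The plan is to reduce the statement to an \emph{exact} realization problem on all of $M_G(X)$ and then to separate the two tasks hidden in it---achieving exact equality of integrals and achieving the sharp bound on the supremum norm over $X$---handling the second task only once, at the very end.

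First I would pass from the extreme boundary to the whole simplex. Since $M_G(X)$ is Bauer, $f$ extends uniquely to a continuous affine function $\tilde f$ on $M_G(X)$, and because an affine continuous function attains its extrema over a simplex on the extreme boundary we have $\tilde f > 0$ and $\sup_{M_G(X)}\tilde f = \|f\|$. Writing $R\colon C(X,\Rb)\to\aff(M_G(X))$ for the map $R(g)(\mu)=\mu(g)$, the conclusion $\mu(h)=f(\mu)$ for ergodic $\mu$ is, again by the Bauer property (uniqueness of the affine extension), equivalent to the exact identity $R(h)=\tilde f$ on all of $M_G(X)$. So it suffices to produce $h\in C(X,\Rb)_+$ with $R(h)=\tilde f$ and $\|h\|\le\|f\|+\delta$.

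The two engines are the following. \emph{(Norm reduction.)} For $g\in C(X,\Rb)_+$, subtracting an element of $I$ leaves every invariant integral unchanged but, by Lemma~\ref{L-equality norm}, can bring the supremum norm on $X$ down to $\|\overline g\|_V$; and $\|\overline g\|_V\le\sup_{\mu\in M_G(X)}\mu(g)$, which I would read off quickly from Lemma~\ref{L-equality} and property (v) (apply Lemma~\ref{L-equality} to $(\sup_\mu\mu(g)+\eps)1-g$ to get $g\prec(\sup_\mu\mu(g)+\eps+\tfrac1n\|d\|)1$ for the resulting $d$, then invoke the identity $\beta=\|\overline g\|_V$ from the proof of Lemma~\ref{L-equality norm}). \emph{(Positivity.)} Lemma~\ref{L-equality} lets me rewrite any $g$ with $\mu(g)>0$ for all $\mu\in M_G(X)$ as $g=u-k$ with $u\in C(X,\Rb)_+$ and $k\in I$, so that $R(u)=R(g)$. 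Finally I would use the standard fact that $R(C(X,\Rb))$ is dense in $\aff(M_G(X))$, the image consisting of the restrictions of the weak$^*$-continuous functionals together with the constant $R(1)=1$. Assembling these as in the Cuntz--Pedersen/Lin scheme, density together with the two engines gives a norm-controlled approximate realization: for nonnegative $a\in\aff(M_G(X))$ and $\eta>0$ there is $h\in C(X,\Rb)_+$ with $\|R(h)-a\|<\eta$ on $M_G(X)$ and $\|h\|\le\sup_{M_G(X)}a+\eta$ (pick $g_0$ with $R(g_0)\approx a+\tfrac\eta2$, make it nonnegative via Lemma~\ref{L-equality}, then reduce its norm via Lemma~\ref{L-equality norm}); a shift $a\mapsto a+2\|a\|1$ upgrades this to a signed version with $\|g\|\le 5\|a\|+\eta$. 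Iterating the signed version with geometrically decaying errors $\eps_n$ produces a uniformly convergent series $g=\sum_n g_n$ whose telescoping integrals give the exact identity $R(g)=\tilde f$. Since then $\mu(g)=\tilde f(\mu)>0$ for all $\mu$, the positivity engine yields $u\ge0$ with $R(u)=\tilde f$, and one last application of the norm-reduction engine gives $k\in I$ with $k\le u$ and $\|u-k\|<\|\overline u\|_V+\delta\le\sup_{M_G(X)}\mu(u)+\delta=\|f\|+\delta$; then $h:=u-k$ is the desired function.

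The main obstacle is precisely the tension between demanding an exact equality of integrals and demanding the sharp bound $\|h\|\le\|f\|+\delta$ on the supremum norm over $X$ (which is far stronger than controlling $\sup_{M_G(X)}\mu(h)$, since $h$ could a priori oscillate wildly on the part of $X$ that is null for every invariant measure). The resolution is that the norm-reduction engine---subtracting an element of $I$, which is invisible to every invariant measure---decouples the $X$-norm from the realization problem: I can afford to ignore the $X$-norm while securing exactness through the iteration, and then impose the sharp norm bound in a single final step without disturbing the already-exact integrals.
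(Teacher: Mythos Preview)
Your proof is correct, and the final two moves---Lemma~\ref{L-equality} for positivity, then Lemma~\ref{L-equality norm} for the norm bound $\|h\|\le\|f\|+\delta$---are exactly what the paper does. The difference lies in how you obtain an \emph{exact} preimage of $\tilde f$ under $R$. The paper does this in one line of duality: $\tilde f$, being continuous and affine on $M_G(X)\subseteq V^*$, extends to a weak$^*$-continuous linear functional on $V^*$ and is therefore (evaluation at) an element of $V$; picking any representative $g$ gives $R(g)=\tilde f$ immediately. You instead use only the soft density of $R(C(X,\Rb))$ in $\aff(M_G(X))$ and manufacture exactness by an iteration with norm-controlled approximants, the norm control at each step coming from your inequality $\|\overline g\|_V\le\sup_\mu\mu(g)$ (derived from Lemmas~\ref{L-equality} and \ref{L-equality norm}). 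That inequality is in fact the Kadison isometry for the order-unit space $V$; together with density and completeness of $V$ it already forces $R$ (viewed on $V$) to be surjective onto $\aff(M_G(X))$, which is precisely the paper's duality step---so your iteration is a hands-on reproof of that surjectivity. The payoff of your route is that it is self-contained from the lemmas and avoids invoking the extension-to-$V^{**}$ step; the payoff of the paper's route is brevity.
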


\begin{proof}
Since every continuous function on the extreme boundary of a Bauer simplex extends (uniquely)
to a continuous affine function on the whole simplex,
we may assume that $f$ is a strictly positive continuous affine function on $M_G (X)$.
Since $f$ is continuous and affine it extends to an element $\tilde{f}\in V^{**}$ which is weak$^*$ continuous 
and hence belongs to $V$. It follows by Lemma~\ref{L-equality} that there exists a
$g\in C(X,\mathbb{R})_+\setminus\{0\}$ such that $\overline{g}=\tilde{f}$.  
By Lemma~\ref{L-equality norm} we can find a $k\in I$ with $k\le g$ such that $\|g-k\|<\|f\|+\delta$. 
Since $g-k\ge0$ and $g-k$ has the same class as $g$ in $V$ (which is $\tilde{f}$), we are done. 
\end{proof}

\section{Proof of Theorem~\ref{T-main}}\label{S-proof}

We begin with a series of technical lemmas required for the perturbation-and-patching argument 
in the beginning of the proof of Lemma~\ref{L-scale}.
The first one is a special case of Lemma~3.4 in \cite{TomWhiWin15}. 
For completeness we include a proof.

\begin{lemma}\label{L-sequence}
Let $X$ be a compact metrizable space and let $W$ be a set of Borel probability measures on $X$. 
Suppose that $(f_{1,n})_{n=1}^\infty,\dots,(f_{L,n})_{n=1}^\infty$ are sequences in $C(X,[0,1])$ such that 
\[
\lim_{n\to\infty}\sup_{\mu\in W}\mu(f_{l,n}f_{l',n})=0
\]
for all $l\ne l'$. Then there exist $0\le \tilde{f}_{l,n}\le f_{l,n}$ such that
\begin{enumerate}
\item $\lim_{n\to\infty}\sup_{\mu\in W}\big|\mu(f_{l,n})-\mu(\tilde{f}_{l,n})|=0$ for all $l=1,\dots,L$, and

\item $\lim_{n\to\infty}\|\tilde{f}_{l,n}\tilde{f}_{l',n}\|=0$ for all $l\ne l'$.
\end{enumerate}
\end{lemma}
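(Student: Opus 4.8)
The plan is to construct the $\tilde f_{l,n}$ so that, for each fixed $n$, they have \emph{pairwise disjoint supports}. This makes condition (ii) hold outright — the products vanish identically — and concentrates all the content of the lemma in condition (i), namely the verification that passing from $f_{l,n}$ to $\tilde f_{l,n}$ discards only a negligible amount of mass, uniformly over $W$. Concretely, for each $l$ and $n$ I would take the positive part of the excess of $f_{l,n}$ over its competitors,
\[
\tilde f_{l,n}=\Big(f_{l,n}-\sum_{l'\ne l}f_{l',n}\Big)_+ ,
\]
which is continuous and satisfies $0\le\tilde f_{l,n}\le f_{l,n}\le 1$, since $(a-c)_+\le a$ whenever $a,c\ge 0$. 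Note that each sum is finite, $L$ being fixed, so there are no convergence issues.

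First I would dispense with (ii). If $\tilde f_{l,n}(x)>0$ and $\tilde f_{m,n}(x)>0$ for some $l\ne m$, then $f_{l,n}(x)>\sum_{l'\ne l}f_{l',n}(x)\ge f_{m,n}(x)$ and, symmetrically, $f_{m,n}(x)>f_{l,n}(x)$, a contradiction. Hence at each point of $X$ at most one of the functions $\tilde f_{1,n},\dots,\tilde f_{L,n}$ is nonzero, so $\tilde f_{l,n}\tilde f_{l',n}\equiv 0$ for all $l\ne l'$ and $\|\tilde f_{l,n}\tilde f_{l',n}\|=0$; thus (ii) holds trivially, even without passing to a limit.

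For (i), I would start from the elementary identity $a-(a-c)_+=\min(a,c)$ (for $a,c\ge 0$), which gives
\[
f_{l,n}-\tilde f_{l,n}=\min\Big(f_{l,n},\sum_{l'\ne l}f_{l',n}\Big)\le\sum_{l'\ne l}\min(f_{l,n},f_{l',n})\le\sum_{l'\ne l}\sqrt{f_{l,n}f_{l',n}} ,
\]
using the subadditivity of $c\mapsto\min(a,c)$ for the first inequality and $\min(a,b)\le\sqrt{ab}$ for the second. Integrating against $\mu\in W$ and applying Cauchy--Schwarz in the form $\mu(\sqrt{f_{l,n}f_{l',n}})\le\sqrt{\mu(f_{l,n}f_{l',n})}$, I would obtain
\[
\sup_{\mu\in W}\big(\mu(f_{l,n})-\mu(\tilde f_{l,n})\big)\le\sum_{l'\ne l}\Big(\sup_{\mu\in W}\mu(f_{l,n}f_{l',n})\Big)^{1/2},
\]
which tends to $0$ by hypothesis. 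Since $0\le\mu(f_{l,n})-\mu(\tilde f_{l,n})$, this is exactly (i).

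The construction makes (ii) cost nothing, so there is no real obstacle on that side; the only genuine step is the final estimate for (i). Its crux is the pointwise inequality $\min(a,b)\le\sqrt{ab}$, which is precisely the bridge that converts the assumption that the overlaps $f_{l,n}f_{l',n}$ are small merely \emph{in measure}, uniformly over $W$, into an $L^1$-control on the mass removed. I expect the subadditivity of the minimum and the correct placement of the supremum (monotonicity of the square root lets one pull $\sup_{\mu\in W}$ inside) to be the only points requiring care.
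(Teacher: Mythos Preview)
Your proof is correct and in fact slightly cleaner than the paper's. The paper follows the C$^*$-algebraic template of Toms--White--Winter: it sets $g_{l,n}=f_{l,n}\sum_{l'\ne l}f_{l',n}$, damps with $h_{l,n,r}=f_{l,n}(1-\varphi_r\circ g_{l,n})$ where $\varphi_r(t)=\min\{1,rt\}$, obtains $\|h_{l,n,r}h_{l',n,r}\|\le(4r)^{-1}$ and $\sup_{\mu\in W}|\mu(f_{l,n})-\mu(h_{l,n,r})|\le r\sup_{\mu\in W}\mu(g_{l,n})$, and then runs a diagonal saturation argument in $n$ and $r$ to produce $\tilde f_{l,n}$. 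Your choice $\tilde f_{l,n}=(f_{l,n}-\sum_{l'\ne l}f_{l',n})_+$ has genuinely disjoint supports, so (ii) is exact rather than asymptotic and no diagonalization is needed; the whole argument collapses to the single pointwise chain $\min(a,\sum b_{l'})\le\sum\min(a,b_{l'})\le\sum\sqrt{ab_{l'}}$ followed by Cauchy--Schwarz. The paper's cutoff approach is the natural one if one is translating from the noncommutative setting (where positive parts of differences are unavailable but functional calculus is), whereas your approach exploits commutativity directly and yields a sharper statement with less work.
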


\begin{proof}
For $r\in\mathbb{N}$ set $\varphi_r(t)=\min\{1,rt\}$. Then $\sup_{t\ge0}|t(1-\varphi_r(t))|=(4r)^{-1}$. 
For each $l=1,\dots,L$ set $g_{l,n}=f_{l,n}\cdot\sum_{l'\ne l}f_{l',n}$,
in which case $\lim_{n\to\infty}\sup_{\mu\in W}\mu(g_{l,n})=0$. Put 
\[
h_{l,n,r}=f_{l,n} \cdot (1-\varphi_r \circ g_{l,n})
\]
and note that
\begin{gather*}
\sup_{\mu\in W}|\mu(f_{l,n})-\mu(h_{l,n,r})|
=\sup_{\mu\in W}\mu(f_{l,n}(\varphi_r \circ g_{l,n}))
\le r\cdot\sup_{\mu\in W}\mu(g_{l,n})\xrightarrow{n\to\infty}0 
\end{gather*}
and, for $l \ne l'$,
\begin{align}\label{eq:pworthogonal}
\|h_{l,n,r} h_{l',n,r}\|
&=\|f_{l,n}f_{l',n}(1-\varphi_r \circ g_{l,n})(1-\varphi_r \circ g_{l',n})\| \\
&\le\|g_{l,n}(1-\varphi_r \circ g_{l,n}))\| 
\le \frac{1}{4r}. \notag
\end{align}
The proof now concludes with a saturation argument: for each $r\in\mathbb{N}$ 
pick an $N_r\in\mathbb{N}$ so that $\sup_{\mu\in W}|\mu(f_{l,n})-\mu(h_{l,n,r})|\le1/r$ 
for all $n\ge N_r$. We may assume that $N_1 = 1<N_2<N_3 < \dots$. 
For $n\in\mathbb{N}$, set
$r_n:=\max\{r\in\mathbb{N}: N_r\le n\}$. Clearly $N_{r_n}\le n<N_{r_n+1}$ for all $n\in\mathbb{N}$. Setting $\tilde{f}_{l,n}=h_{l,n,r_n}$ we have
\[
\sup_{\mu\in W}|\mu(f_{l,n})-\mu(\tilde{f}_{l,n})|\le \frac{1}{r_n}
\]
for all $n\in\mathbb{N}$. This and \eqref{eq:pworthogonal}, along with the fact that $r_n\to\infty$ as $n\to\infty$, conclude the proof.
\end{proof}

We now apply Lemma~\ref{L-sequence} and Theorem~\ref{T-realization} to establish the following (cf.\ Lemma~3.1 in \cite{Ma19}).

\begin{lemma}\label{L-mutual singularity}
Let $G\curvearrowright X$ be a minimal action of a countably infinite group on a compact metrizable space. 
Let $W_1,\dots, W_L$ be pairwise disjoint closed subsets of $M_G^\mathrm{erg}(X)$
and let $\eps > 0$.
Then there exist pairwise disjoint closed sets $K_1,\dots,K_L\subseteq X$ such that
\[
\mu(K_j)>1-\varepsilon
\]
for all $\mu\in W_j$ and $j=1,\dots,L$. 
\end{lemma}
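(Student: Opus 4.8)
The plan is to produce, for each $j$, a continuous function on $X$ whose integral against every $\mu\in W_j$ is close to $1$ and which is almost orthogonal, in a suitable sense, to the functions attached to the other indices; the sets $K_j$ will then arise as closed super-level sets. Since $X$ is compact metrizable, so is $M_G(X)$, and hence $M_G^\erg(X)$ is metrizable in the subspace topology. The sets $W_1,\dots,W_L$ being pairwise disjoint and closed, Urysohn's lemma furnishes $\psi_1,\dots,\psi_L\in C(M_G^\erg(X),[0,1])$ with $\psi_j\equiv 1$ on $W_j$ and $\psi_j\equiv 0$ on $W_{j'}$ for $j'\neq j$. For each $n\in\Nb$ I would apply Theorem~\ref{T-realization} to the strictly positive function $\psi_j+\frac1n\cdot 1$ with tolerance $\delta=\frac1n$, obtaining $h_{j,n}\in C(X,\Rb)_+$ with $\|h_{j,n}\|\le 1+\frac2n$ and $\mu(h_{j,n})=\psi_j(\mu)+\frac1n$ for every $\mu\in M_G^\erg(X)$, and then set $f_{j,n}=\min\{h_{j,n},1\}\in C(X,[0,1])$.

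Writing $W=\bigcup_{j=1}^L W_j$, the next step consists of two elementary estimates. First, for $\mu\in W_j$ one has $\mu(h_{j,n})=1+\frac1n$ and $(h_{j,n}-1)^+\le\frac2n$, whence $\mu(f_{j,n})\ge\mu(h_{j,n})-\mu((h_{j,n}-1)^+)\ge 1-\frac1n$. Second, if $\mu\in W_m$ and $m\neq l$ then $\mu(f_{l,n})\le\mu(h_{l,n})=\psi_l(\mu)+\frac1n=\frac1n$; since for $l\neq l'$ the index $m$ cannot coincide with both, it follows that $\sup_{\mu\in W}\mu(f_{l,n}f_{l',n})\le\frac1n\to 0$. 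Thus the hypotheses of Lemma~\ref{L-sequence} are met for the $L$ sequences $(f_{j,n})_n$ with respect to the set $W$.

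Applying Lemma~\ref{L-sequence} yields $0\le\tilde f_{j,n}\le f_{j,n}$ with $\sup_{\mu\in W}|\mu(f_{j,n})-\mu(\tilde f_{j,n})|\to 0$ and $\|\tilde f_{j,n}\tilde f_{j',n}\|\to 0$ for $j\neq j'$. Fixing $n$ large enough that $\mu(\tilde f_{j,n})>1-\frac{\eps}{2}$ for all $\mu\in W_j$ and $\|\tilde f_{j,n}\tilde f_{j',n}\|<\frac14$ for all $j\neq j'$, I would take $K_j=\{x\in X:\tilde f_{j,n}(x)\ge\tfrac12\}$. These are closed; they are pairwise disjoint because a common point of $K_j$ and $K_{j'}$ would force $\tilde f_{j,n}\tilde f_{j',n}\ge\frac14$; and for $\mu\in W_j$, writing $p=\mu(K_j)$ and using $0\le\tilde f_{j,n}\le 1$ gives $1-\frac{\eps}{2}<\mu(\tilde f_{j,n})\le p+\frac12(1-p)=\frac{1+p}{2}$, so that $\mu(K_j)=p>1-\eps$, as required.

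The main obstacle is the passage from integral information to genuine disjointness: Theorem~\ref{T-realization} controls only the single integrals $\mu(h_{j,n})$ over the ergodic boundary and gives no direct hold on the products $\mu(h_{j,n}h_{j',n})$, let alone on $\|h_{j,n}h_{j',n}\|$. The device that bridges this gap is Lemma~\ref{L-sequence}, which upgrades the easily arranged asymptotic orthogonality of the integrals over $W$ into honest uniform orthogonality of the perturbations $\tilde f_{j,n}$; this is precisely why the construction is organised as sequences in $n$ with vanishing tolerances rather than as a single realization. One should also be mindful to run Lemma~\ref{L-sequence} over $W=\bigcup_j W_j$ rather than the whole boundary, since outside $W$ the values $\psi_j(\mu)$ are uncontrolled and the integral-orthogonality estimate would break down.
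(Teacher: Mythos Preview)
Your proof is correct and follows essentially the same route as the paper's: Urysohn functions on $M_G^{\erg}(X)$, realization via Theorem~\ref{T-realization}, Lemma~\ref{L-sequence} applied over $W=\bigcup_j W_j$, and then closed super-level sets. The only cosmetic difference is that the paper squeezes the Urysohn functions into $[1/n,1-1/n]$ so that Theorem~\ref{T-realization} yields functions directly in $C(X,[0,1])$ without your truncation $f_{j,n}=\min\{h_{j,n},1\}$, and it uses the cutoff level $\varepsilon$ instead of $\tfrac12$.
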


\begin{proof}
Since $W_1,\dots, W_L$ are closed and pairwise disjoint, by Urysohn's lemma we can find,
for each $n\in\mathbb{N}$, functions 
$\psi_{1,n},\dots,\psi_{L,n}\in C(M_G^\mathrm{erg}(X),[1/n,1-1/n])$ with 
$\psi_{l,n}\vert_{W_l}=1-1/n$ and $\psi_{l,n}\vert_{W_{l'}}=1/n$ for $l'\ne l$. 
Since $M_G(X)$ is a Bauer simplex, we can extend these to strictly positive affine 
continuous functions on $M_G(X)$ of norm at most $1-1/n$, for which we use the same notation. 
By Theorem~\ref{T-realization} there exist $f_{l,n}\in C(X,[0,1])$ such that 
$\mu(f_{l,n})=\psi_{l,n}(\mu)$ for all $\mu\in M_G(X)$. 
The condition in Lemma~\ref{L-sequence} is clearly satisfied for $W=\bigsqcup_{i=1}^L W_i$ there, and so by replacing 
$f_{l,n}$ with $\tilde{f}_{l,n}$ from the conclusion of that lemma
we may also assume that $\lim_{n\to\infty}\|f_{l,n}f_{l',n}\|=0$.

Choose $n\in\Nb$ large enough so that
\begin{enumerate}
\item $\mu(f_{l,n})>1-\varepsilon$ for all $\mu\in W_l$, and

\item $\|f_{l,n}f_{l'n}\|<\varepsilon^2$ for all $l\ne l'$. 
\end{enumerate}
Setting $K_l=\{x\in X: f_{l,n}(x)\ge\varepsilon\}$ for each $l$, 
we obtain closed sets which are pairwise disjoint due to (ii). Moreover, for $\mu\in W_l$
we have $\int_{X\setminus K_l}f_{l,n}\, d\mu\le \varepsilon\cdot\mu(X\setminus K_l)\le\varepsilon$ and hence, by (i), 
\begin{gather*}
\mu(K_l)>\int_{K_l}f_{l,n}\, d\mu
=\mu(f_{l,n})-\int_{X\setminus K_l}f_{l,n}\, d\mu>1-2\varepsilon. \qedhere
\end{gather*}
\end{proof}

The next lemma is the well-known comparison property for ergodic probability-measure-preserving actions. 

\begin{lemma}\label{L-comparison}
Let $G\curvearrowright (X,\mu )$ be an ergodic measure-preserving action of a countable group on a probability space.
Let $A,B\subseteq X$ be measurable sets with $\mu(A)=\mu(B)$. Then
there exist a collection $\{ A_s \}_{s\in G}$ of pairwise disjoint measurable subsets of $A$
indexed by $G$ (some of which may be empty) such that $\mu (A\setminus\bigsqcup_{s\in G} A_s ) = 0$
and the sets $sA_s$ for $s\in G$ are pairwise disjoint and contained in $B$.
\end{lemma}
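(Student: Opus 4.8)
The plan is to match $A$ into $B$ piece by piece by a greedy exhaustion, using ergodicity to guarantee that as long as an uncovered part of $A$ of positive measure survives, some group element transports a positive-measure chunk of it into the uncovered part of $B$.

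The key observation I would record first is the following consequence of ergodicity: if $C,D\subseteq X$ are measurable with $\mu(C)>0$ and $\mu(D)>0$, then there is an $s\in G$ with $\mu(sC\cap D)>0$. Indeed the saturation $\bigcup_{t\in G}tC$ is $G$-invariant of positive measure, hence of full measure by ergodicity, so it meets $D$ in positive measure; since $G$ is countable, some single translate $sC$ already does. Now given measurable $A'\subseteq A$ and $B'\subseteq B$ with $\mu(A')=\mu(B')>0$, applying this with $C=A'$ and $D=B'$ yields an $s$ with $\mu(sA'\cap B')>0$; setting $P=A'\cap s^{-1}B'$ gives $P\subseteq A'$, $sP=sA'\cap B'\subseteq B'$, and $\mu(P)=\mu(sP)>0$ since the action is measure-preserving. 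Thus from any remainder of positive measure I can always peel off a positive-measure piece of $A'$ and move it into $B'$ by a single group element.

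Next I would set up the greedy iteration. For remaining sets $A',B'$ with $\mu(A')=\mu(B')$, write $m(A',B')=\sup\{\mu(P):P\subseteq A'\text{ measurable},\ sP\subseteq B'\text{ for some }s\in G\}$, which by the previous paragraph is strictly positive whenever $\mu(A')>0$. Starting from $A_0=A$ and $B_0=B$, at stage $k$ I would choose, whenever $\mu(A_k)>0$, a piece $P_k\subseteq A_k$ and an element $s_k\in G$ with $s_kP_k\subseteq B_k$ and $\mu(P_k)>\tfrac12 m(A_k,B_k)$, and then set $A_{k+1}=A_k\setminus P_k$ and $B_{k+1}=B_k\setminus s_kP_k$. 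This maintains $\mu(A_k)=\mu(B_k)$ at every stage; by construction the sets $P_k$ are pairwise disjoint, while the images $s_kP_k$ are pairwise disjoint (each is removed from $B_k$ before the next step is taken) and contained in $B$.

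The crux of the argument, and the step I expect to be the main obstacle, is showing that this exhausts $A$ up to a null set rather than stalling with a positive-measure remainder. Since $\sum_k\mu(P_k)\le\mu(A)<\infty$, we have $\mu(P_k)\to0$ and hence $m(A_k,B_k)\to0$. Put $A_\infty=\bigcap_k A_k=A\setminus\bigsqcup_k P_k$ and $B_\infty=\bigcap_k B_k=B\setminus\bigsqcup_k s_kP_k$, so that $\mu(A_\infty)=\mu(B_\infty)$. If this common value were positive, then by the first paragraph there would be a measurable $P\subseteq A_\infty$ and an $s\in G$ with $sP\subseteq B_\infty$ and $\mu(P)>0$; but then $P\subseteq A_k$ and $sP\subseteq B_k$ for every $k$, forcing $m(A_k,B_k)\ge\mu(P)>0$ for all $k$ and contradicting $m(A_k,B_k)\to0$. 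Hence $\mu(A_\infty)=0$. It remains to index the pieces by $G$: since the $s_k$ need not be distinct, for each $s\in G$ I would set $A_s=\bigsqcup_{k:s_k=s}P_k$. These are pairwise disjoint measurable subsets of $A$ with $\bigsqcup_{s\in G}A_s=\bigsqcup_k P_k$, so $\mu(A\setminus\bigsqcup_{s\in G}A_s)=0$, and $sA_s=\bigsqcup_{k:s_k=s}s_kP_k$; as all the images $s_kP_k$ are pairwise disjoint and contained in $B$, the sets $sA_s$ are pairwise disjoint subsets of $B$, which is exactly what is required.
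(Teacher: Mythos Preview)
Your argument is correct. The ergodicity consequence in your first paragraph, the greedy half-supremum exhaustion, the limit argument showing $m(A_k,B_k)\to 0$, and the final regrouping by group element all work as stated.

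The paper, however, takes a more direct route: it fixes an enumeration $s_1,s_2,\dots$ of $G$ and at step $n$ sets $A_n=(A\setminus\bigsqcup_{i<n}A_i)\cap s_n^{-1}(B\setminus\bigsqcup_{i<n}s_iA_i)$, i.e., it takes the \emph{entire} piece of the current remainder that the specific element $s_n$ can carry into the unfilled part of $B$. The contradiction at the end is then immediate: if the leftover $C\subseteq A$ had positive measure, ergodicity would give some $n_0$ with $\mu(s_{n_0}C\cap D)>0$, but any point of $C\cap s_{n_0}^{-1}D$ would by definition already lie in $A_{n_0}$, which is disjoint from $C$. This avoids both the supremum $m(A',B')$ and the half-approximation device, and since each group element is used exactly once the indexing by $G$ is automatic with no regrouping needed. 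Your approach, by contrast, is the classical near-maximal exhaustion pattern; it is slightly longer here but has the virtue of being a template that transfers verbatim to settings where one cannot simply cycle once through a countable index set.
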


\begin{proof}
Fix an enumeration $s_1 , s_2 , \dots$ of $G$ and recursively define pairwise disjoint
measurable subsets $A_1 , A_2 , \dots$ of $A$ by setting $A_1 = A\cap s_1^{-1} B$ and
$A_n = (A\setminus \bigsqcup_{i=1}^{n-1} A_i ) \cap s_n^{-1}(B\setminus\bigsqcup_{i=1}^{n-1}s_iA_i)$ for $n>1$.
Then the sets $s_n A_n$ for $n\ge1$ are pairwise disjoint and contained in $B$.
Moreover, the complement $C = A\setminus\bigsqcup_{n\ge1} A_n$ satisfies $\mu (C) = 0$,
for otherwise the measure of the set $D = B\setminus \bigsqcup_{n\ge1} s_n A_n$,
being equal to $\mu (C)$, would also be nonzero, in which case ergodicity and the $G$-invariance of 
the set $GC$ would
imply the existence of an $n_0 \in\Nb$ such that $\mu (s_{n_0} C\cap D) > 0$, contradicting 
our choice of the set $A_{n_0}$. 
\end{proof}

\begin{lemma}\label{L-castle}
Let $G\curvearrowright (X,\mu )$ be an ergodic measure-preserving action of a countable group on a probability space.
Let $q\in\mathbb{N}$ and let $A_1,\dots, A_q \subseteq X$ be pairwise disjoint measurable sets of equal measure.
Let $\eps > 0$. Then there exists a finite collection $\{B_j \}_{j\in J}$ of pairwise disjoint measurable
subsets of $A_1$ and sets $S_j \subseteq G$ of cardinality $q$ such that
$\{(S_j ,B_j )\}_{j\in J}$ is a castle and
\begin{enumerate}
\item for every $j\in J$ there is an enumeration $s_{j,1} , \dots , s_{j,q}$ of $S_j$
such that $s_{j,k}B_j\subseteq A_k$ for all $k=1,\dots,q$, and

\item $\mu(\bigsqcup_{j\in J}S_j B_j )\geq\mu(\bigsqcup_{k=1}^q A_k)-\varepsilon$.
\end{enumerate}
\end{lemma}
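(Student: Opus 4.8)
The plan is to build the castle by comparing the base set $A_1$ with each of the remaining sets $A_k$ via Lemma~\ref{L-comparison}, and then to take a common refinement of the resulting partitions of $A_1$. First, for each $k=2,\dots,q$, since $\mu(A_1)=\mu(A_k)$ I would apply Lemma~\ref{L-comparison} with $A=A_1$ and $B=A_k$ to obtain a partition modulo null sets $A_1=\bigsqcup_{s\in G}A_s^{(k)}$ into measurable pieces such that the translates $sA_s^{(k)}$ are pairwise disjoint and contained in $A_k$. Because $\mu(\bigsqcup_s sA_s^{(k)})=\mu(A_1)=\mu(A_k)$, these translates exhaust $A_k$ modulo a null set, and the induced map $\phi_k\colon A_1\to A_k$ given by $\phi_k(x)=sx$ for $x\in A_s^{(k)}$ is injective modulo null sets, precisely because the sets $sA_s^{(k)}$ are pairwise disjoint.

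Next I would form the common refinement: for each tuple $\mathbf{t}=(t_2,\dots,t_q)\in G^{q-1}$ set $B_{\mathbf{t}}=\bigcap_{k=2}^q A_{t_k}^{(k)}$. These sets are pairwise disjoint and their union is $A_1$ modulo a null set, so only countably many of them have positive measure and their measures sum to $\mu(A_1)$. Hence I can choose a finite index set $J$ of such tuples, each of positive measure, with $\sum_{j\in J}\mu(B_j)\ge\mu(A_1)-\varepsilon/q$, where $B_j$ denotes $B_{\mathbf{t}}$ for the tuple $\mathbf{t}=(t_2^{(j)},\dots,t_q^{(j)})$ indexed by $j$. For each $j$ I then put $S_j=\{e,t_2^{(j)},\dots,t_q^{(j)}\}$ with the enumeration $s_{j,1}=e$ and $s_{j,k}=t_k^{(j)}$, so that $s_{j,k}B_j\subseteq A_k$ for all $k$ by the construction of the $A_{t_k}^{(k)}$. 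Since the $A_k$ are pairwise disjoint and $B_j$ has positive measure, these $q$ group elements are necessarily distinct, giving $|S_j|=q$ and property (i).

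It then remains to verify the castle property and the measure estimate (ii). Two levels lying in different sets $A_k$ are automatically disjoint, while two levels $s_{j,k}B_j$ and $s_{j',k}B_{j'}$ lying in the same $A_k$ equal $\phi_k(B_j)$ and $\phi_k(B_{j'})$, which are disjoint because $B_j$ and $B_{j'}$ are disjoint subsets of $A_1$ and $\phi_k$ is injective modulo null sets. This yields both the within-tower disjointness (so each $(S_j,B_j)$ is a genuine tower) and the across-tower disjointness of the sets $S_jB_j$, that is, the castle property. Finally, since the action is measure-preserving and $|S_j|=q$ we have $\mu(S_jB_j)=q\mu(B_j)$, whence $\mu(\bigsqcup_{j\in J}S_jB_j)=q\sum_{j\in J}\mu(B_j)\ge q\mu(A_1)-\varepsilon=\mu(\bigsqcup_{k=1}^q A_k)-\varepsilon$.

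I expect the only real obstacle to be the bookkeeping around null sets and the passage from the countable family of refinement pieces $B_{\mathbf{t}}$ to a finite subfamily exhausting enough measure; the disjointness verifications, which are the heart of the castle property, hinge entirely on the injectivity of the comparison maps $\phi_k$ furnished by Lemma~\ref{L-comparison}.
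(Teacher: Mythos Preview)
Your argument is correct and follows essentially the same route as the paper: apply Lemma~\ref{L-comparison} to compare $A_1$ with each $A_k$, take the common refinement of the resulting countable partitions of $A_1$, and pass to a finite subcollection carrying all but $\varepsilon/q$ of the mass. The only cosmetic differences are that the paper runs $k$ from $1$ to $q$ (the $k=1$ case being trivial) and restricts upfront to tuples of distinct group elements, whereas you fix $s_{j,1}=e$ and deduce distinctness from positivity of $\mu(B_j)$; also note that your maps $\phi_k$ are in fact literally injective on their domain $\bigsqcup_s A_s^{(k)}$, so the level disjointness you need is genuine and not merely modulo null sets.
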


\begin{proof}
By Lemma~\ref{L-comparison}, 
for every $k=1,\dots ,q$ there exists a collection $\{ A_{k,s} \}_{s\in G}$ of pairwise disjoint 
measurable subsets of $A_1$ indexed by $G$ such that $\mu (A_1\setminus\bigsqcup_{s\in G} A_{k,s} ) = 0$
and the sets $sA_{k,s}$ for $s\in G$ are pairwise disjoint and contained in $A_k$.
Writing $W$ for the collection of $q$-tuples of distinct elements of $G$,
for every $w=(s_1 , \dots , s_q )$ we set 
$B_w = \bigcap_{k=1}^q A_{k,s_k}$ and $S_w = \{ s_1 , \dots , s_q \}$,
in which case $|S_w | = q$ and $s_k B_w\subseteq A_k$ for all $k=1,\dots,q$.
The sets $B_w$ partition a conull subset of $A_1$, and so by the countable additivity of $\mu$
we can find a finite subcollection $W_0 \subseteq W$ such that 
$\mu(\bigsqcup_{w\in W_0}B_w )\geq\mu(A_1)-\varepsilon /q$
and hence $\mu(\bigsqcup_{w\in W_0}S_w B_w )\geq q (\mu (A_1 ) - \eps /q)
= \mu(\bigsqcup_{k=1}^q A_k)-\varepsilon$.
Thus the sets $S_w$ and $B_w$ for $w\in W_0$ will do the job.
\end{proof}

\begin{lemma}\label{L-qshaped castles for individual measures}
Let $G\curvearrowright X$ be a minimal action of a countably infinite discrete group
on an infinite compact metrizable space. Let $\mu\in M_G^\erg (X)$. Let $q\in\Nb$. Let $\Omega$
be a finite subset of $C(X,[0,1])$. Let $\delta > 0$.
Then there is an open castle $\{ (S_j , V_j ) \}_{j\in J}$ 
such that, for each $j\in J$,
\begin{enumerate}
\item the shape $S_j$ has cardinality $q$,

\item the diameters of the sets $h(S_jV_j )\subseteq \Rb$ for $h\in\Omega$
are all at most $\delta$,

\item $\mu ( \bigsqcup_{j\in J} S_jV_j ) \geq 1-\delta$.
\end{enumerate}
\end{lemma}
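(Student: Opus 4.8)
The plan is to build a \emph{measurable} castle with the three properties first, and then upgrade it to an open castle using the regularity of $\mu$. I would begin by noting that $\mu$ is non-atomic: since $X$ is infinite and the action is minimal, every orbit is infinite, so any atom would produce infinitely many points of equal positive mass and contradict $\mu(X)=1$. Using that each $h\in\Omega$ is continuous and that $X$ is compact, I then partition $X$ into finitely many Borel cells $P_1,\dots,P_m$ on each of which every $h\in\Omega$ oscillates by less than $\delta/2$ (pull back a partition of $[0,1]$ into short intervals under each $h$ and take the common refinement). On each cell with $\mu(P_r)>0$ I use non-atomicity to split $P_r$ into $q$ pairwise disjoint Borel sets $A_1^{(r)},\dots,A_q^{(r)}$ of equal measure, and apply Lemma~\ref{L-castle} to them with tolerance $\delta/(2m)$.

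The crucial point is that the towers so produced have \emph{every} level inside a single cell: a level $s_{j,k}B_j^{(r)}$ lies in $A_k^{(r)}\subseteq P_r$, so the whole tower $S_jB_j^{(r)}$ lies in $P_r$, whence every $h\in\Omega$ varies by less than $\delta/2$ across it, giving the diameter bound (ii) on the nose (in particular across distinct levels, which is the part that is not automatic). Since distinct cells are disjoint and the towers inside a fixed cell already form a castle, the union over $r$ is a single measurable castle with shapes of cardinality $q$ whose levels cover a set of measure at least $1-\delta/2$.

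It remains to make the castle open without spoiling disjointness, the oscillation bound, or the measure estimate, and this is where the real work lies. First I would shrink each base to a compact $C_j\subseteq B_j$ with $\mu(B_j\setminus C_j)$ so small that the compact castle $\{(S_j,C_j)\}$ still covers more than $1-\delta$ (using that the $q$ levels of a tower are disjoint and $\mu$ is preserved). The levels $L_{j,k}:=s_{j,k}C_j$ then form a \emph{finite} family of pairwise disjoint compact sets, hence have a common positive separation, say at least $3\rho$. Let $U_{j,k}$ denote the open $\rho$-neighborhood of $L_{j,k}$. Invoking the uniform continuity of the finitely many maps $h\circ s_{j,k}$, and the fact that each compact $C_j$ lies in the open set $\bigcap_k s_{j,k}^{-1}(U_{j,k})$, I would choose an open $V_j$ with $C_j\subseteq V_j$ inside this intersection and small enough that $h$ oscillates by less than $\delta$ over $S_jV_j$ for every $h\in\Omega$. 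Each enlarged level $s_{j,k}V_j$ then stays inside $U_{j,k}$, so all levels remain pairwise disjoint and $\{(S_j,V_j)\}$ is an open castle; since $V_j\supseteq C_j$ the covered measure only increases, so (iii) persists, while (i) and (ii) hold by construction.

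The main obstacle is precisely this last step of simultaneous control: shrinking each $V_j$ enough to preserve both the compact separation $\rho$ (for disjointness of all enlarged levels) and the $\delta$-oscillation bound, while keeping $C_j\subseteq V_j$ so no measure is lost. This is feasible exactly because the castle and $\Omega$ are finite, so that a single separation $\rho$ and a single modulus of continuity suffice.
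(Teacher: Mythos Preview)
Your proof is correct and follows essentially the same approach as the paper: partition $X$ into finitely many Borel pieces on which each $h\in\Omega$ has small oscillation, apply Lemma~\ref{L-castle} inside each piece to build a Borel castle with $q$-element shapes, then shrink the bases to compact sets and re-expand to open neighbourhoods using the positive separation of the finitely many compact levels together with uniform continuity. The only differences are cosmetic (your explicit $\rho$-neighbourhood bookkeeping versus the paper's appeal to uniform continuity of the level maps), so there is nothing substantive to add.
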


\begin{proof}
By compactness and continuity we can find a finite open cover $\mathcal{U}$ of $X$ so that for every 
$U\in\mathcal{U}$ and $h\in\Omega$ the image $h(U)$ 
has diameter at most $\delta/2$ in $\Rb$. 
Applying the standard recursive disjointification process with respect to some fixed enumeration of the
elements of $\mathcal{U}$, we obtain a finite Borel partition $\{B_i\}_{i\in I}$ of $X$
such that $h(B_i)$ has diameter at most $\delta/2$ for all $h\in\Omega$ and $i\in I$.

Since $G$ and $X$ are infinite and the action is minimal, the measure $\mu$ is atomless.
Thus each $B_i$ can be written as a disjoint union of $q$ sets, say $B_i^{(l)}$ for $l=1,\dots ,q$,
which each have measure $\mu(B_i)/q$. 
For each $i\in I$ we apply Lemma~\ref{L-castle} to $\{B_i^{(l)}\}_{l=1}^q$ so as
to obtain a Borel castle $\{(S_{i,j},B_{i,j})\}_{j\in J_i}$ such that 
$\bigsqcup_{j\in J_i} S_{i,j} B_{i,j} \subseteq B_i$, 
each shape $S_{i,j}$ has cardinality $q$, and 
$\mu(\bigsqcup_{j\in J_i}S_{i,j}B_{i,j})\ge\mu(B_i)-\delta /(2|I|)$. 
Putting these $|I|$ many castles together, we obtain a Borel castle $\{(S_j , E_j )\}_{j\in J}$ 
such that each shape has cardinality $q$, the measure of $\bigsqcup_{j\in J}S_jE_j$ is at least $1-\delta /2$, 
and each tower is entirely contained in some $B_i$. 

Using the regularity of $\mu$, for each tower base $E_j$ we can choose a compact subset $K_j\subseteq E_j$ 
so that $\mu(K_j)>\mu(E_j)-\delta/(2q|J|)$. 
Since $\{(S_j ,K_j )\}_{j\in J}$ is a closed castle, there is a positive constant bounding from below the distance 
between any two of its levels. By the uniform continuity of the homeomorphisms coming 
from the elements in $\bigcup_{j\in J}S_j$, we can then find for each $j$
an open neighbourhood of $K_j$, say $V_j$, so that $\{(S_j ,V_j )\}_{j\in J}$ is again a castle.
In view of the uniform continuity of the functions $h\in\Omega$ and of the homeomorphisms coming from $\bigcup_jS_j$,
we can also choose the neighbourhoods $V_j$ so that the
image of each tower under any of the functions of $\Omega$ has diameter less than $\delta$. Finally, we observe that
\begin{align*}
\mu \bigg( \bigsqcup_{j\in J} S_jV_j \bigg)
\geq q\sum_{j\in J} \mu (K_j ) 
&\geq q\sum_{j\in J} \bigg(\mu (E_j ) - \frac{\delta}{2q|J|} \bigg) \\
&= \mu \bigg(\bigsqcup_{j\in J}S_jE_j \bigg) - \frac{\delta}{2} 
\geq 1-\delta .  \qedhere
\end{align*}
\end{proof}

The next lemma provides us with a uniform incrementation mechanism that later in
Lemma~\ref{L-function} will be leveraged into a global condition via a maximality argument, much along the lines 
of what was done in the C$^*$-algebraic context in Section~4 of \cite{TomWhiWin15}.

\begin{lemma}\label{L-scale}
Let $G\curvearrowright X$ be a minimal action of a countably infinite discrete group
on an infinite compact metric space. Suppose that $M_G (X)$ is a Bauer simplex and that $M_G^\erg (X)$
has finite covering dimension. Then there exists an $\alpha > 0$ such that for every finite set
$\Omega\subseteq C(X,[0,1])$ and $\eta > 0$ there are $d_1 , d_2 \in C(X,[0,1])$ satisfying $d_1 d_2 = 0$ 
and $\mu (d_i h) \geq \alpha \mu (h) - \eta$ for every $i=1,2$, $h\in\Omega$, and $\mu\in M_G (X)$.
\end{lemma}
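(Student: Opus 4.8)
The plan is to produce a constant $\alpha$ depending only on $N:=\dim M_G^{\erg}(X)$, of the order of $\tfrac{1}{2(N+1)}$, and to build $d_1,d_2$ from the two levels of shape-$2$ castles. The first thing I would record is that, by $G$-invariance, the two levels $sB$ and $tB$ of any tower $(\{s,t\},B)$ have \emph{equal} measure for every $\mu\in M_G(X)$; this is what lets a single castle feed both $d_1$ and $d_2$ at once, each carrying half the mass of the castle. The role of the finite dimension is to promote the per-measure castles of Lemma~\ref{L-qshaped castles for individual measures} to something uniform, through an $(N+1)$-coloring of a fine cover of the compact (since Bauer) boundary $M_G^{\erg}(X)$.

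In detail, for each $\mu_0\in M_G^{\erg}(X)$ I would apply Lemma~\ref{L-qshaped castles for individual measures} with $q=2$ to get a shape-$2$ open castle whose levels have $\Omega$-diameter below $\delta$ and whose support has $\mu_0$-measure exceeding $1-\delta$. Since that support is open, $\mu\mapsto\mu(\text{support})$ is lower semicontinuous, so by the portmanteau theorem the very same castle still covers more than $1-\delta$ of every $\mu$ in a weak$^*$ neighbourhood of $\mu_0$. These neighbourhoods cover $M_G^{\erg}(X)$, and using $\dim M_G^{\erg}(X)=N$ I would pass to a finite closed refinement decomposing into $N+1$ families $\{F_{c,k}\}_{k}$ ($c=0,\dots,N$), the members of each fixed colour being pairwise disjoint, with each $F_{c,k}$ inside one of the neighbourhoods. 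For a fixed colour $c$, Lemma~\ref{L-mutual singularity} applied to the disjoint closed sets $\{F_{c,k}\}_k$ yields pairwise disjoint closed $K_{c,k}\subseteq X$ on which the corresponding measures concentrate; cutting the castle for $F_{c,k}$ down to a neighbourhood of $K_{c,k}$ by a bump function makes the colour-$c$ towers live on disjoint regions of $X$. Summing first levels over $k$ gives $A_c\in C(X,[0,1])$ and summing second levels gives $B_c\in C(X,[0,1])$; disjointness within the colour together with disjointness of the two levels gives $A_cB_c=0$, and the equal-mass observation plus the $\Omega$-smallness of the levels makes every $\mu$ of colour $c$ satisfy $\mu(A_ch),\mu(B_ch)\ge\tfrac12\mu(h)-O(\delta+\varepsilon)$ for all $h\in\Omega$.

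The remaining, and in my view hardest, step is to patch the colour-wise orthogonal pairs $(A_c,B_c)_{c=0}^{N}$ into a \emph{single} pair $d_1,d_2$ of globally orthogonal functions that are large on \emph{every} measure. The naive choice $d_1=\tfrac{1}{N+1}\sum_cA_c$, $d_2=\tfrac{1}{N+1}\sum_cB_c$ lies in $C(X,[0,1])$ and is large on each measure in the right proportion, but it fails $d_1d_2=0$: across two distinct colours the support of a first level $A_c$ may meet that of a second level $B_{c'}$. One cannot repair this by deleting overlaps, because the two halves of any fixed measure each carry a full half of its mass, so excising the region occupied by the other colours' second levels can annihilate almost all of $A_c$. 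The genuine obstacle is thus to arrange a \emph{globally consistent} splitting of $X$ into a ``first'' region and a ``second'' region that is simultaneously compatible with the level structure of all $N+1$ colours.

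My proposed remedy is to build this separation inductively over the colours, maintaining disjoint open sets $\mathsf P\supseteq\bigcup_{c'<c}\supp A_{c'}$ and $\mathsf Q\supseteq\bigcup_{c'<c}\supp B_{c'}$ and, when colour $c$ is added, re-forming its towers on the not-yet-separated part of $X$ by the ergodic comparison of Lemma~\ref{L-comparison}, so that their first and second levels are pushed into $\mathsf P$ and $\mathsf Q$ respectively; the multiplicity bound $N+1$ is what guarantees that every measure remains well covered by at least one surviving colour, which is the source of the factor $\tfrac{1}{2(N+1)}$ in $\alpha$. Taking $d_i$ to be the suitably normalised sum of the separated levels then gives $d_1d_2=0$ together with the required lower bounds. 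I expect the bookkeeping in this inductive separation — simultaneously preserving orthogonality and ensuring that no measure loses its guaranteed colour — to be the technical heart of the argument.
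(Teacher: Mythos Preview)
Your setup through the $(N+1)$-colouring and the use of Lemma~\ref{L-mutual singularity} matches the paper exactly, and you have correctly located the real obstacle: passing from colour-wise orthogonal pairs $(A_c,B_c)$ to a single globally orthogonal pair. However, your proposed inductive remedy does not work. Lemma~\ref{L-comparison} applies to a \emph{single} ergodic measure and produces a \emph{Borel} decomposition; the colour classes $F_{c,k}$ are closed sets of ergodic measures, and you need continuous functions at the end. There is no mechanism for ``re-forming'' an open castle so that its first levels land in a prescribed open set $\mathsf P$ and its second levels in $\mathsf Q$ uniformly over all $\mu\in F_{c,k}$, and any measurable rearrangement would destroy both continuity and the $\Omega$-smallness of the levels. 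The difficulty is not bookkeeping but a genuine missing idea: with shape-$2$ castles the first/second labelling is arbitrary and cannot be made globally coherent across overlapping castles from different colours.

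The paper sidesteps this entirely by a different mechanism. It takes $q=2(m+1)$ rather than $q=2$, and does \emph{not} assign $d_1$ and $d_2$ to different levels. Instead it forms the single function $g=\sum_{i,j} f_{i,j}$, a sum of bump functions supported on the tower \emph{bases}, and sets $d_1=\varphi_{\delta,2\delta}\circ g$ and $d_2=1-\varphi_{0,\delta}\circ g$. Orthogonality is then automatic from the cutoffs. The point is that the bases, being one level out of $2(m+1)$, occupy at most a $\tfrac{1}{2(m+1)}$ fraction of each tower with respect to every invariant measure; summed over $m+1$ colours this is at most $\tfrac12$, so $d_2$ (essentially the indicator of ``far from any base'') satisfies $\mu(d_2 h)\ge\tfrac12\mu(h)-\delta$. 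For $d_1$ one uses $\Omega$-smallness across the $2(m+1)$ levels to convert $\mu(f_{i,j}h)$ into $\tfrac{1}{2(m+1)}\mu(\psi_i h)$, and the colour containing $\mu$ gives $\mu(\psi_i)\ge 1-\delta^2$; since $g\le m+1$ one obtains $\mu(d_1 h)\ge\tfrac{1}{2(m+1)^2}\mu(h)-4\delta$. Thus $\alpha=\tfrac{1}{2(m+1)^2}$, and no patching across colours is ever needed.
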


\begin{proof}
Set $\delta = \eta /4$.
Write $m$ for the covering dimension of $M_G^\erg (X)$. 
Let $\Omega$ be a finite subset of $C(X,[0,1])$. 

By Lemma~\ref{L-qshaped castles for individual measures}, given $\mu\in M_G^\mathrm{erg}(X)$
there is an open castle $\sC_\mu :=\{(S_{\mu ,j},U_{\mu ,j})\}_{j\in J_\mu}$ such that
\begin{enumerate}
\item[(a)] each shape $S_{\mu ,j}$ has cardinality $2(m+1)$,

\item[(b)] the diameters of the sets $h(S_{\mu ,j}U_{\mu,j} )\subseteq\mathbb{R}$ for $h\in\Omega$ 
are all at most $\delta/(2(m+1))$, and

\item[(c)] $\mu (\bigsqcup_{j\in J_\mu}S_{\mu ,j}U_{\mu ,j} )>1-\delta^2 /4$.
\end{enumerate}
We may assume, by reparametrizing the towers if necessary,
that all of the shapes $S_{i,j}$ contain the identity element, a fact that will be used for a couple of estimates later in the proof.
In view of (c) we can find open sets $V_{\mu ,j}\subseteq \overline{V_{\mu ,j}}\subseteq U_{\mu ,j}$ such that 
the set $C_\mu :=\bigsqcup_{j\in J_\mu}S_{\mu ,j}V_{\mu ,j}$ satisfies 
$\mu(C_\mu )>1-\delta^2 /2$.

Consider for each $\mu\in M_G^\erg (X)$ the weak$^*$ open neighbourhood 
\[
W_\mu:=\{\nu\in M_G^\erg (X): \nu(C_\mu )>1-\delta^2 /2 \} 
\]
of $\mu$.
By compactness the cover $\{W_\mu\}_{\mu\in M_G^\mathrm{erg}(X)}$ has 
a finite subcover $\{W_{\mu_1},\dots,W_{\mu_L}\}$. By the $m$-dimensionality of $M_G^{\mathrm{erg}}(X)$, 
we can refine this finite subcover to an $(m+1)$-colourable 
open cover, i.e., we can find an open cover $\{W_{i,k} : k=1,\dots,k_i,\, i=0,\dots,m\}$ so that for each $i=0,\dots,m$ the collection $\{\overline{W_{i,k}}\}_{k=1}^{k_i}$ is disjoint and 
for each $i=0,\dots,m$ and $k=1,\dots,k_i$ there is an $l(i,k)\in\{1,\dots,L\}$ such that 
$W_{i,k}\subseteq W_{\mu_{l(i,k)}}$.

Let $i\in\{0,\dots,m\}$. Set $M=\max_{k=1,\dots,k_i} |J_{\mu_{l(i,k)}}|$.
Since the sets $\overline{W_{i,k}}$ for $k=1,\dots ,k_i$ are closed and pairwise disjoint,
Lemma~\ref{L-mutual singularity} allows us to find pairwise disjoint closed sets 
$A_1,\dots,A_{k_i}\subseteq X$ such that $\mu(A_k)>1-\delta^2 /(8(m+1)^2M^2)$ 
for all $\mu\in \overline{W_{i,k}}$ and $k=1,\dots,k_i$. 
By the regularity of $X$ we can find open sets $O_k$ such that 
$A_k\subseteq O_k$ 
and the collection $\{\overline{O_k}\}_{k=1}^{k_i}$ is disjoint. 
For $k=1,\dots,k_i$ set
\[
B_{k}=\bigcap\big\{s^{-1}O_k: s\in\textstyle\bigcup_{j\in J_{\mu_{l(i,k)}}}S_{\mu_{l(i,k)},j}\big\}
\]
and
\[
\
\tilde{A}_k=\bigcap\big\{s^{-1}A_k: s\in\textstyle\bigcup_{j\in J_{\mu_{l(i,k)}}}S_{\mu_{l(i,k)},j}\big\}.
\]
Notice that for $\mu\in W_{i,k}$ we have
\begin{align}\label{eq:small compl}
\mu(X\setminus \tilde{A}_k)
&\le |J_{\mu_{l(i,k)}}|\cdot 2(m+1)\mu(X\setminus A_k)\\
&<\frac{\delta^2}{4(m+1)M} . \notag
\end{align}
Set
\[
\sC=\big\{\big(S_{\mu_{l(i,k)},j}, U_{\mu_{l(i,k)},j}\cap B_k\big): 
k=1,\dots,k_i,\, j\in J_{\mu_{l(i,k)}}\big\} ,
\]
which is an open castle whose shapes all have cardinality $2(m+1)$.
Note also, by condition (b) above, that for every tower $(T,V)$ of $\sC$
and every $h\in\Omega$ the image $h(TV)$ has diameter 
at most $\delta/(2(m+1))$.

For every $k=1,\dots,k_i$ and $j\in J_{\mu_{l(i,k)}}$ 
we use Urysohn's lemma to find a function $f_{k,j}\in C(X,[0,1])$ such that 
$f_{k,j}=1$ on $\overline{V_{\mu_{l(i,k)},j}}\cap\tilde{A}_k$ and $f_{k,j}$ is supported 
in $U_{\mu_{l(i,k)},j}\cap B_k$.
If $\mu$ is a measure in $\bigsqcup_{k=1}^{k_i}W_{i,k}$, say $\mu\in W_{i,\bar{k}}$, 
then we have 
\begin{align*}
\lefteqn{\mu\bigg(\sum_{k=1}^{k_i}\, \sum_{j\in J_{\mu_{l(i,k)}}}\sum_{s\in S_{\mu_{l(i,k)},j}}sf_{k,j}\bigg)}\hspace*{15mm}\\ 
&\geq \mu\bigg(\sum_{j\in J_{\mu_{l(i,\bar{k})}}}\sum_{s\in S_{\mu_{l(i,\bar{k})},j}}sf_{\bar{k},j}\bigg) \\
&\geq \sum_{j\in J_{\mu_{l(i,\bar{k})}}}\sum_{s\in S_{\mu_{l(i,\bar{k})},j}}
\mu(s(V_{\mu_{l(i,\bar{k})},j}\cap\tilde{A}_{\bar{k}})) \\
&= 2(m+1)\cdot\sum_{j\in J_{\mu_{l(i,\bar{k})}}}\mu(V_{\mu_{l(i,\bar{k})},j}\cap\tilde{A}_{\bar{k}}) \\
\overset{\eqref{eq:small compl}}&{\ge}2(m+1)\cdot\sum_{j\in J_{\mu_{l(i,\bar{k})}}}
\bigg(\mu(V_{\mu_{l(i,\bar{k})},j})-\frac{\delta^2}{4(m+1)M}\bigg) \\
&= \mu\bigg(\bigsqcup_{j\in J_{\mu_{l(i,\bar{k})}}}S_{\mu_{l(i,\bar{k}),j}}
V_{\mu_{l(i,\bar{k})},j}\bigg)-\frac{\delta^2|J_{\mu_{l(i,\bar{k})}}|}{2M} \\
&=\mu(C_{\mu_{l(i,\bar{k})}})-\frac{\delta^2|J_{\mu_{l(i,\bar{k})}}|}{2M} \\
&\ge\mu(C_{\mu_{l(i,\bar{k})}})-\frac{\delta^2}{2} \\
&\ge1-\delta^2 .
\end{align*}

Re-indexing everything for simplicity, what we have constructed above are
an open cover $\{ W_0 , \dots , W_m \}$ of $M_G^\erg (X)$
and, for each $i=0, \dots , m$, an open castle $\{ (S_{i,j}, V_{i,j} ) \}_{j\in J_i}$ 
and functions $f_{i,j} \in C(X,[0,1])$ for $j\in J_i$ such that 
\begin{enumerate}
\item the function $\psi_i := \sum_{j\in J_i} \sum_{s\in S_{i,j}} sf_{i,j}$
satisfies $\mu (\psi_i ) \geq 1-\delta^2$ for every $\mu\in W_i$
\end{enumerate}
and, for every $j\in J_i$,
\begin{enumerate}
\item[(ii)] the shape $S_{i,j}$ has cardinality $2(m+1)$,

\item[(iii)] the diameter of the set $h(S_{i,j} V_{i,j} )\subseteq \Rb$ is at most $\delta /(2(m+1))$ for all $h\in\Omega$,

\item[(iv)] $f_{i,j}$ is supported in $V_{i,j}$.
\end{enumerate}
Note that (i) implies that, for every $\mu\in W_i$,
\begin{align}\label{E-integral}
\mu (\{ x\in X : \psi_i (x) > 1-\delta \} ) 
\geq 1 - \delta .
\end{align}
Set $g = \sum_{i=0}^m \sum_{j\in J_i} f_{i,j}$.
For real numbers $a<b$ we write $\varphi_{a,b}$ for the continuous function on $\Rb$ which is $0$ on $(-\infty , a]$,
$1$ on $[b,\infty )$, and linear on $[a,b]$, and put
\begin{align*}
d_1 &= \varphi_{\delta , 2\delta} \circ g,\\
d_2 &= 1 - \varphi_{0 , \delta} \circ g.
\end{align*}
Then $d_1, d_2\in C(X,[0,1])$ satisfy $d_1 d_2 = 0$. 

Now let $h\in\Omega$ and suppose we are given a $\mu\in M_G^\erg (X)$.
Take an $0 \leq i\leq m$ such that $\mu\in W_i$.
Observe that
\begin{align}\label{E-approx}
\bigg| \sum_{j\in J_i} \mu (f_{i,j} h) - \frac{1}{2(m+1)} \mu (\psi_i h) \bigg| 
&= \frac{1}{2(m+1)} \bigg| \sum_{j\in J_i} \sum_{s\in S_{i,j}} 
\mu (f_{i,j} (h - s^{-1} h)) \bigg| \\
\overset{\text{(iii),(iv)}}&{\le}\frac{1}{2(m+1)}\sum_{j\in J_i}\sum_{s\in S_{i,j}}\frac{\delta}{2(m+1)}\mu(V_{i,j}) \notag \\
&\le\frac{\delta}{2(m+1)} < \delta . \notag
\end{align}
Since $d_1 + \delta 1 \geq (m+1)^{-1} g$, it follows that
\begin{align*}
\mu (d_1 h)
&\geq \frac{1}{m+1} \mu (gh) - \delta \\
&\geq \frac{1}{m+1} \mu \bigg(\bigg(\sum_{j\in J_i} f_{i,j} \bigg)h\bigg) - \delta \\
&\overset{(\ref{E-approx})}{\geq} \frac{1}{m+1} \cdot \frac{1}{2(m+1)} 
\mu (\psi_i h) - 2\delta \\
&\overset{(\ref{E-integral})}{\geq} \frac{1}{2(m+1)^2} \mu (h) - 4\delta .
\end{align*}

On the other hand, for every $\mu\in M_G^\erg (X)$ we have
\begin{align*}
\mu ((1-d_2 )h)
&= \mu ((\varphi_{0 , \delta} \circ g) h) \\
&\leq \sum_{i=0}^m \sum_{j\in J_i} \mu (1_{V_{i,j}} h) \\
\overset{\text{(iii)}}&{\leq} \sum_{i=0}^m \sum_{j\in J_i} \frac{1}{2(m+1)} 
\mu \bigg(\sum_{s\in S_{i,j}} 1_{sV_{i,j}} h\bigg) + \delta \\
&\leq \frac{m+1}{2(m+1)} \mu (h) + \delta \\
&= \frac12 \mu (h) + \delta
\end{align*}
and thus
\begin{align*}
\mu (d_2 h) = \mu (h) - \mu ((1-d_2 )h) \geq \frac12 \mu (h) - \delta.
\end{align*}
We conclude that $\mu (d_i h) \geq (2(m+1)^2)^{-1} \mu (h) - \eta$ for all $i=1,2$ 
and $\mu\in M_G^\erg (X)$,
and hence also for all $\mu\in M_G (X)$ by the Krein--Milman theorem.
\end{proof}

\begin{lemma}\label{L-function}
Let $G\curvearrowright X$ be a minimal action of a countably infinite discrete group
on an infinite compact metrizable space. Suppose that $M_G (X)$ is a Bauer simplex and that $M_G^\erg (X)$
has finite covering dimension.
Let $C$ and $D$ be disjoint closed subsets of $X$.
Then for every $\delta > 0$ there exist $h_1 ,h_2 \in C(X,[0,1])$ such that $h_1 h_2 = 0$, $h_1 =1$ on $C$, $h_2 =1$ on $D$,
and $\mu (h_1 + h_2 ) \geq 1-\delta$ for all $\mu\in M_G (X)$.
\end{lemma}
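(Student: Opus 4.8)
The plan is to establish the statement by a maximality argument that uses Lemma~\ref{L-scale} as an incrementation step, in the spirit of Section~4 of \cite{TomWhiWin15}. Let $\mathcal{P}$ denote the set of pairs $(h_1,h_2)\in C(X,[0,1])^2$ with $h_1 h_2 =0$, $h_1=1$ on $C$, and $h_2 =1$ on $D$; this set is nonempty, since Urysohn's lemma produces such a pair supported in disjoint open neighbourhoods of $C$ and $D$. Orthogonality forces $h_1+h_2\le 1$, so $\beta=\sup_{(h_1,h_2)\in\mathcal P}\inf_{\mu\in M_G(X)}\mu(h_1+h_2)$ lies in $[0,1]$, and since the infimum over the weak$^*$-compact set $M_G(X)$ of the continuous functional $\mu\mapsto\mu(h_1+h_2)$ is attained, it suffices to show $\beta=1$: this yields, for each $\delta>0$, a pair with $\mu(h_1+h_2)>1-\delta$ for all $\mu$, as required.

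So suppose toward a contradiction that $\beta<1$. Fixing small parameters to be calibrated at the end, I would take a pair $(h_1,h_2)\in\mathcal P$ that is nearly optimal, so that $\mu(f)\le 1-\beta+\eps'$ for every $\mu$, where $f=1-h_1-h_2\in C(X,[0,1])$ is the deficiency; note that $f=0$ on $C\cup D$. Applying Lemma~\ref{L-scale} with $\Omega=\{f\}$ and a small $\eta$ produces the fixed constant $\alpha>0$ together with functions $d_1,d_2\in C(X,[0,1])$ satisfying $d_1 d_2=0$ and $\mu(d_i f)\ge\alpha\mu(f)-\eta$ for $i=1,2$ and all $\mu$. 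The guiding idea is to enlarge the pair by redistributing the mass $f(d_1+d_2)$, roughly by setting $h_i'=h_i+fd_i$; then $h_1'+h_2'=h_1+h_2+f(d_1+d_2)$, and summing the two estimates gives $\mu(h_1'+h_2')\ge 1-(1-2\alpha)\mu(f)-2\eta\ge\beta+2\alpha(1-\beta)-(1-2\alpha)\eps'-2\eta$. Since $\alpha>0$ and $1-\beta>0$, the gain $2\alpha(1-\beta)$ is a fixed positive quantity, so once $\eps'$ and $\eta$ are small enough this forces $\inf_\mu\mu(h_1'+h_2')>\beta$, contradicting the definition of $\beta$. The presence of \emph{two} orthogonal functions in Lemma~\ref{L-scale}, and hence the factor $2\alpha$ rather than $\alpha$, is what creates the slack that will absorb the losses incurred below.

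The step I expect to be the main obstacle is making the increment genuinely admissible, that is, producing a new pair that again lies in $\mathcal P$. The naive choice $h_i'=h_i+fd_i$ need not satisfy $h_1'h_2'=0$: the product equals $f(h_1 d_2+h_2 d_1)$, supported on the fuzzy boundary region where one of the $h_i$ is strictly between $0$ and $1$ while the $d_j$ point the ``wrong'' way. To repair this I would introduce the cutoffs $b_i=\varphi_{0,\lambda}\circ h_i$ (which satisfy $b_1 b_2=0$) and pass to increments of the form $h_i'=h_i+f\bigl(F_i+(1-b_1-b_2)d_i\bigr)$, where $F_i=\varphi_{\lambda,2\lambda}\circ h_i$ fills each already-present tower up to height $1$, while the factor $1-b_1-b_2$ switches off the free growth coming from $d_i$ as soon as the competing function becomes active. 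A direct check shows that this keeps $h_1'+h_2'\le 1$ and confines the orthogonality defect $\|h_1'h_2'\|$ to order $\lambda$, after which one restores exact orthogonality by the renormalization $(\,\cdot\,-\sqrt\lambda)_+/(1-\sqrt\lambda)$: the truncated functions have disjoint supports because the defect is below $\lambda$, and they remain equal to $1$ on $C$ and on $D$ because $f$ vanishes there.

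The genuinely delicate point, and the reason the bump functions are doing real work here as flagged in the introduction, is that these cutoffs cost mass on the slivers $\{0<h_i<2\lambda\}$, a loss bounded by $\mu\bigl(f(b_i-F_i)\bigr)$, and this must be shown to be small \emph{uniformly} over $M_G(X)$, not merely pointwise in $\mu$, so as not to consume the gain $2\alpha(1-\beta)$. As a preliminary reduction I would \emph{sharpen} the near-optimal pair, replacing each $h_i$ by $\varphi_{0,\eps}\circ h_i$, which has the same support (hence preserves membership in $\mathcal P$ and orthogonality) and only increases $\mu(h_i)$, thereby narrowing the transition regions as much as possible; one then calibrates $\lambda$ against this sharpening and against the renormalization parameter $\sqrt\lambda$. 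Controlling $\sup_{\mu\in M_G(X)}\mu(\{0<h_i<2\lambda\})$ as $\lambda\to 0$ is exactly the crux: since $\|b_i-F_i\|$ does not tend to $0$, a bare portmanteau or Dini argument is not enough, and one must exploit the specific bump-function geometry (together with the atomlessness of the ergodic measures established earlier) to force this quantity down uniformly. Once all four parameters $\eps',\eta,\eps,\lambda$ are chosen small in the correct order, the net increment stays above $\beta$ and the contradiction closes, giving $\beta=1$ and hence the lemma.
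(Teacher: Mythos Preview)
Your overall strategy---a maximality argument with Lemma~\ref{L-scale} supplying the increment---matches the paper's, but your specific construction has a genuine gap exactly where you flag it. The uniform control $\sup_{\mu\in M_G(X)}\mu(\{0<h_i<2\lambda\})\to 0$ is simply unavailable, and without it your increment can fail outright: nothing rules out an invariant measure $\mu$ putting a fixed fraction of its mass (roughly $1-\beta$) on points where $h_1$ lies just above $\lambda$, with the remainder on $\{h_1=1\}$. At such points $b_1=1$ annihilates the $(1-b_1-b_2)d_i$ term while $F_1\approx 0$, so the increment is negligible and $\mu(h_1'+h_2')\approx\mu(h_1+h_2)\approx\beta-\eps'<\beta$. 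Sharpening via $\varphi_{0,\eps}$ only relocates the problem to $\{0<h_1<2\lambda\eps\}$; atomlessness of ergodic measures is irrelevant, since the sliver is an open set rather than a point. The obstacle you identified is real, and your construction does not get past it.

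The paper bypasses the sliver issue entirely with a different decomposition. One forms the partition of unity
\[
f_1=\varphi_{2\eta,3\eta}\circ h_1,\quad f_2=\varphi_{2\eta,3\eta}\circ h_2,\quad g_1=\varphi_{\eta,2\eta}\circ h_1-f_1,\quad g_2=1-f_1-f_2-g_1,
\]
applies Lemma~\ref{L-scale} with $\Omega=\{g_1,g_2\}$ rather than $\{f\}$, and sets $\tilde h_i=f_i+d_ig_i$. The built-in orthogonalities $f_1f_2=f_1g_2=f_2g_1=d_1d_2=0$ yield $\tilde h_1\tilde h_2=0$ \emph{exactly}, with no truncation step. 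The cost of replacing $h_i$ by $f_i$ is bounded \emph{pointwise} (one has $h_1+h_2-f_1-f_2\le 2\eta$), hence uniformly over all $\mu$; this is precisely what eliminates any need for a sliver estimate. The gain is only $\alpha\mu(g_1+g_2)$, not $2\alpha$: the two orthogonal $d_i$ are each assigned to one piece of the pre-split deficiency $g_1,g_2$, rather than both acting on a single $f$, so your remark about the factor $2\alpha$ creating the slack misreads the mechanism. A dichotomy on whether $\mu(f_1+f_2)\geq\beta+\eta$ then closes the contradiction.
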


\begin{proof}
Consider the set of all $\lambda \in [0,1]$ such that 
there exist $h_1 ,h_2 \in C(X,[0,1])$ satisfying $h_1 h_2 = 0$, $h_1 =1$ on $C$, $h_2 =1$ on $D$,
and $\mu (h_1 + h_2 ) \geq \lambda$ for all $\mu\in M_G (X)$. By Urysohn's lemma this
set contains $0$ and hence is nonempty, so that it has a supremum $\beta$.
What we need to show is that $\beta = 1$.

Suppose to the contrary that $\beta < 1$. Let $\alpha > 0$ be as given by Lemma~\ref{L-scale}.
Choose an $\eta > 0$ small enough so that $\alpha (1-\beta - \eta ) > 5\eta $.
Then we can find $h_1 ,h_2 \in C(X,[0,1])$ satisfying $h_1 h_2 = 0$, $h_1 =1$ on $C$, $h_2 =1$ on $D$,
and $\mu (h_1 + h_2 ) \geq \beta - \eta$ for all $\mu\in M_G (X)$. 
With $\varphi_{a,b}$ for $a<b$ denoting as before the continuous function on $\Rb$ which is $0$ on $(-\infty , a]$,
$1$ on $[b,\infty )$, and linear on $[a,b]$, we define
\begin{align*}
f_1 &= \varphi_{2\eta,3\eta} \circ h_1 , \\
f_2 &= \varphi_{2\eta ,3\eta} \circ h_2 , \\
g_1 &= \varphi_{\eta , 2\eta} \circ h_1 - \varphi_{2\eta,3\eta} \circ h_1 , \\
g_2 &= 1 - g_1 - f_1 - f_2 .
\end{align*}
These four functions form a partition of unity in $C(X)$.
By Lemma~\ref{L-scale} there are $d_1 , d_2 \in C(X,[0,1])$ such that $d_1 d_2 = 0$ and,
for all $\mu\in M_G (X)$,
\begin{align}\label{E-alpha}
\mu (d_1 g_1) 
&\geq \alpha \mu (g_1 ) - \eta , \\
\mu (d_2 g_2 ) 
&\geq \alpha \mu (g_2 ) - \eta . \notag
\end{align}
Define
\begin{align*}
\tilde{h}_1 &= f_1 + d_1 g_1 , \\
\tilde{h}_2 &= f_2 + d_2 g_2 .
\end{align*}
Since the products $f_1 g_2$, $f_2 g_1$, $f_1 f_2$, and $d_1 d_2$ are all zero
we have $\tilde{h}_1 \tilde{h}_2 = 0$. 

Let $\mu\in M_G (X)$. If it happens that $\mu (f_1 + f_2 ) \geq \beta + \eta$ 
then we have
\begin{align*}
\mu (\tilde{h}_1 + \tilde{h}_2 ) 
\geq \mu (f_1 + f_2 ) 
\geq \beta + \eta .
\end{align*}
If on the other hand $\mu (f_1 + f_2 ) < \beta + \eta$ then
\begin{align*}
\mu (g_1 + g_2) 
= 1 - \mu (f_1 + f_2 ) 
> 1 - \beta - \eta
\end{align*}
and hence, using the inequalities $f_1 + f_2 \geq h_1 + h_2 - 2\eta$ and (\ref{E-alpha})
along with our choice of $\eta$,
\begin{align*}
\mu (\tilde{h}_1 + \tilde{h}_2 ) 
&= \mu (f_1 + f_2 ) + \mu (d_1 g_1) + \mu (d_2 g_2) \\
&\geq \mu (h_1 + h_2 ) - 2\eta + \alpha \mu (g_1 + g_2 ) - 2\eta \\
&> \beta - 5\eta + \alpha (1-\beta - \eta) \\
&> \beta
\end{align*}
Thus in either case we have $\mu (\tilde{h}_1 + \tilde{h}_2 ) > \beta$,
contradicting the definition of $\beta$.
\end{proof}

\begin{proof}[Proof of Theorem~\ref{T-main}]
We may assume that $X$ is infinite, for otherwise the SBP is obvious.
Let $C$ and $D$ be disjoint closed subsets of $X$ and let $\delta > 0$.
By the characterization of the SBP mentioned in Section~\ref{S-preliminaries},
it suffices to find an open set $U\subseteq X$ such that $C\subseteq U \subseteq X\setminus D$
and $\mu (\partial U) < \delta$ for every $\mu\in M_G (X)$.
By Lemma~\ref{L-function} there exist $h_1 ,h_2 \in C(X,[0,1])$ such that $h_1 h_2 = 0$, $h_1 =1$ on $C$, $h_2 =1$ on $D$,
and $\mu (h_1 + h_2 ) \geq 1-\delta$ for all $\mu\in M_G (X)$.
Set $U = \{ x\in X : h_1 (x) > 0 \}$. Then $h_1 = h_2 = 0$ on $\partial U$, so that for every $\mu\in M_G (X)$ we have
\begin{align*}
\mu (\partial U) \leq 1 - \mu (h_1 + h_2 ) \leq \delta .
\end{align*}
Since $C\subseteq U \subseteq X\setminus D$, we are done.
\end{proof}

The following provides some examples of actions of locally finite groups on infinite-dimensional spaces
to which Theorem~\ref{T-main} applies.

\begin{example}
Using the same kind of construction carried out in \cite{Tho94} in the C$^*$-algebraic framework of simple interval algebras,
we can build the following minimal actions $G\curvearrowright X$ of UHF-type limits of permutation groups 
with prescribed metrizable Choquet simplex as the space of invariant Borel probability measures and with $X$ of 
infinite covering dimension.  
Let $m_1 , m_2 , \dots$ be a sequence of integers greater than $1$.
For each $k$ set $n_k = m_1 m_2 \cdots m_k$ and define $G_k$ to be the permutation group $\Sym (n_k )$.

Write $Y$ for the Hilbert cube and set $X_k = Y \times \{ 0,\dots , n_k -1 \}$.
The construction will also work for other path-connected compact metrizable spaces $Y$, such 
as finite-dimensional cubes, but we are interested here in examples of the SBP beyond the setting of
finite-dimensional spaces, where it is automatic assuming the action is free \cite{LinWei00,Sza15}
(another possibility would be to let the space $Y$ depend on $k$ and tend to infinity in dimension, 
like in the construction of many simple AH algebras, including the C$^*$-algebras of Villadsen and Toms 
in \cite{Vil98} and \cite{Tom08},
whose $\Zb$-dynamical analogues do not have the SBP \cite{LinWei00,GioKer10}).
For each $k>1$ let $f_{k,0} , \dots , f_{k,m_k-1} : Y \to Y$ be continuous maps.
Then we define a continuous connecting map $\varphi_{k+1} : X_{k+1} \to X_k$ by
\[
\varphi_{k+1} ((y,an_k+b)) = (f_{k+1,a} (y),b)
\]
for $a=0,\dots , m_{k+1}-1$ and $b=0,\dots ,n_k-1$.
The group $G_k$ acts on $X_k$ by permuting the second coordinate, and these actions
induce embeddings $\gamma_k : G_k \hookrightarrow G_{k+1}$ such that
$s\varphi_{k+1} (x) = \varphi_{k+1}(\gamma_k (s) x)$ for all $x\in X_{k+1}$ and $s\in G_k$.
Write $X$ for the inverse limit of the spaces $X_k$ and $G$ for the direct limit of the groups $G_k$.
Then $G$ is a countably infinite locally finite group and we obtain a continuous action 
$G\curvearrowright X$ on a compact metrizable space.

Now suppose that for every $k$ one has $\bigcup_{a=0}^{m_{k+1}-1} f_{k+1,a} ( Y ) = Y$.
Then the connecting maps $\varphi_k$ are surjective. 
Moreover, one can check in this situation that the action will be minimal precisely when for every $k\in\Nb$ and
nonempty open set $U\subseteq Y$ there is an integer $l\geq 0$ such that for every $y\in Y$ there are 
$a_j \in \{ 0,\dots , m_k -1 \}$ for $j=0,\dots , l$ for which
\[
(f_{k,a_0} \circ f_{k+1,a_1} \circ\cdots\circ f_{k+l,a_l} )(y) \in U
\]
(compare Lemma~1.2 in \cite{Tho94} and Proposition~2.1 in \cite{DadNagNemPas92}).
The construction in Section~3 of \cite{Tho94} shows that, given any metrizable Choquet simplex $S$, 
if we choose the numbers $m_k$ to grow fast enough then one can choose functions $f_{k,i}$ satisfying
these conditions in such a way that $M_G (X)$ is affinely homeomorphic to $S$. Note that Lemma~3.8 of \cite{Tho94} works just as well with almost the same proof even if $P$ therein is replaced with the simplex of Borel probability measures on any compact metrizable path-connected space that is not a point (such as the Hilbert cube).
Note moreover that, in the proof of Lemma~3.7 in \cite{Tho94}, for each $k$ a modification of two of the initially chosen
functions $f_{k,i}$ is made so as to produce an iterated dyadic splitting of the interval
that guarantees simplicity (which translates as minimality in
our context) without changing the trace simplex in the limit. It is for the latter purpose that 
the growth condition on the numbers $m_k$ is used, via Lemma~3.5 in \cite{Tho94}. In our case we 
can still dyadically split to the same effect but now in a way that incorporates larger and larger finite sets of coordinates
in the Hilbert cube, so that we now need to modify a growing number of the $f_{k,i}$ from stage to stage. 
If moreover we set one of the $f_{k,i}$ for each $k$ to be the identity map 
then the space $X$ contains closed subsets homeomorphic to $Y$ and hence
is infinite-dimensional. These modifications can all be absorbed at the measure-theoretic level
(i.e., without changing the simplex of invariant Borel probability measures in the limit)
by requiring that the numbers $m_k$ grow even faster.

The above actions are closely related to the orbit-breaking techniques that appear in the study
of C$^*$-algebra crossed products. For example, if $Y$ is a singleton then, up to orbit equivalence,
we get a $\Zb$-odometer but with one orbit cut into two pieces where the infinite rollover occurs.
This gives rise to an AF algebra (the C$^*$-algebraic analogue of local finiteness, or of a locally finite group
acting on the Cantor set) 
inside the crossed product of the odometer, with the inclusion inducing an isomorphism of $K_0$ groups.
The crossed product itself is not AF 
(the acting group $\Zb$ yields a $K_1$ obstruction) but rather a Bunce--Deddens algebra.
See Example~VIII.6.3 and Section~VIII.7 of \cite{Dav96}.
\end{example}

\end{document}